\title{ Optimality Conditions and Duality for Multiobjective Fractional 
 Bilevel Optimization Problems} 
\author{Felipe Lara\thanks{Instituto de Alta Investigaci\'on (IAI), Universidad 
de Tarapac\'a, Arica, Chile. E-mail: felipelaraobreque@gmail.com;
flarao@academicos.uta.cl. Web: felipelara.cl, Orcid-ID: 0000-0002-9965-0921} 
\and
Rishabh Pandey\thanks{Department of Mathematics, National Institute of 
Technology, Chaltlang, Aizawl, 796012, Mizoram, India. E-mail:
 pandeyrishabh512@gmail.com}
\and
Vinay Singh\thanks{Department of Mathematics, National Institute of 
Technology, Chaltlang, Aizawl, 796012, Mizoram, India. E-mail: vinay.math@nitmz.ac.in. Orcid-ID: 0000-0003-3269-8776}}
\newtheorem{theorem}{Theorem}[section]
\newtheorem{definition}[theorem]{Definition}
\newtheorem{example}[theorem]{Example}
\newtheorem{lemma}[theorem]{Lemma}
\newtheorem{remark}[theorem]{Remark}
\begin{document}

\date{\today}

\maketitle

\begin{abstract}
 \noindent This paper studies a multiobjective bilevel optimization problem where each objective is a fractional function. By reformulating the problem into a single-level one, we establish refined necessary and sufficient optimality conditions. These results are derived using ${\partial}_D$-nonsmooth Abadie-type constraint qualifications and generalized convexity concepts (quasiconvexity and pseudoconvexity) based on directional convexificators. We also prove weak and strong duality theorems for a Mond-Weir dual problem formulated with directional convexificators. Finally, several examples are provided to illustrate the advantages of our approach.

\medskip

\noindent{\small \emph{Keywords}: Bilevel programming; Nonconvex optimization;
Multiobjective optimization; Fractional programming; Optimality conditions; Duality}

\medskip

\noindent \textbf{Mathematics Subject Classification:} 90C26; 90C32; 90C46.
\end{abstract}

\section{Introduction}

Bilevel programming is an interesting research field in optimization and variational analysis, which has attracted the attention of a huge number of researchers from applied mathematics in virtue of its applications in different areas as economics, engineering, finance, chemistry and computer science among others (see \cite{dempe2002foundations,dempe2015bilevel} and references therein).

The bilevel programming problem involves optimizing a lower level pro\-blem to assess each upper level solution, leading to computational  challenges. Theo\-re\-tically, an upper level solution is deemed valid or feasible only if the corresponding lower level variables represent the true global optimum of the  lower level problem. The study of these problems involves the analysis of a pair of interconnected optimization problems in which the set of feasible solutions for the upper level problem is implicitly determined by the set of solutions for the lower level problem.

Bilevel optimization has been extensively studied in the literature \cite{bard1984optimality,chuong2020optimality,gadhi2012necessary,gadhi2018equivalent,gadhi2019sufficient,gadhi2023multiobjective,IL-1,IL-3,lafhim2018necessary,Lara-4,ye2006constraint}. For instance, Bard \cite{bard1984optimality} investigated linear bilevel programming problems and developed first-order necessary optimality conditions. In a different approach, Lafhim and coauthors \cite{lafhim2018necessary} employed scalarization techniques and a generalized Abadie constraint qualification to derive KKT-type necessary optimality conditions for multiobjective bilevel problems. Furthermore, Gadhi \cite{gadhi2018equivalent} established necessary optimality conditions for a semivectorial bilevel problem under a special scalarization function, while in a later work \cite{gadhi2023multiobjective}, the same author developed necessary optimality conditions for multiobjective bilevel programming problems in terms of tangential subdifferentials.

Let $\mathcal{F}_k, \mathcal{G}_k: \mathbb{R}^{n_1} \times \mathbb{R}^{n_2} \rightarrow \mathbb{R}$ be real-valued functions for every $k \in \{1,\ldots,n\}$ and $\mathcal{H}_j: \mathbb{R}^{n_1} \times \mathbb{R}^{n_2} \rightarrow \mathbb{R}$ be real-valued functions for every $j \in J := {\{1,\ldots,p\}}$. Then, the multiobjective fractional bilevel optimization problem given by:
\begin{equation}\label{problem:a} \text{(A)}:\quad 
\begin{cases}
\underset{\qquad \ x, y}{\mathop{\mathbb{R}^{n}_+ - \min }}\ \frac{{\mathcal{F}} (x,y)}{{\mathcal{G}}(x,y)}= \left(\frac{{\mathcal{F}_1}(x,y)}{{\mathcal{G}_1} (x,y)}, \dots, \frac{{\mathcal{F}_n} (x,y)}{ {\mathcal{G}_n} (x,y)} \right) \\
s. \ t. ~~ {\mathcal{H}_j} (x,y) \leq 0, ~ \forall ~ j \in J \\
\qquad ~ y \in \mathfrak{B} (x)
\end{cases}
\end{equation}
where, for every $x \in \mathbb{R}^{n_1}$, \textbf{$\mathfrak{B} (x)$} is the solution set of the lower level problem
\begin{equation}\label{problem:ax}
(\text{A}_{x}):\quad 
\begin{cases}
 \underset{ y}{\mathop{ \min }} \ \ f (x, y)\\
 s.\ t. \quad \phi_s(x,y) \leq 0,\ \   \forall\ s \in S,
\end{cases}
\end{equation}
in which $f: \mathbb{R}^{n_1} \times \mathbb{R}^{n_2} \rightarrow 
\mathbb{R}$ and $\phi_s: \mathbb{R}^{n_1} \times \mathbb{R}^{n_2} \rightarrow \mathbb{R}$ for every $s \in S := { \{1,\ldots,q\}}$ are real-valued functions as well. Furthermore, we will assume that $\mathcal{F}_k (x, y) \geq 0$ and $\mathcal{G}_k (x, y) > 0$ for all $(x, y) \in \mathcal{M}$ and all $k \in \{1,\ldots,n\}$, where $\mathcal{M} := \{(x, y) \in \mathbb{R}^{n_1} \times \mathbb{R}^{n_2}: \mathcal{H}_j (x, y) \leq 0, ~ \forall ~ j \in J, ~ \forall ~ y \in \mathfrak{B} (x)\}$. 

Fractional bilevel programming has gained considerable interest because of its ability to capture hierarchical decision structures involving ratio-based performance measures. In this context, Kohli \cite{kohli2023optimality} established optimality conditions for multiobjective fractional bilevel problems using convexificators, providing an effective tool for handling nonsmooth fractional objectives. Saini and Kailey \cite{saini2024duality} further contributed to the field by deriving duality results for multiobjective fractional bilevel programming and demonstrating the usefulness of convexificator-based approaches. More recently, Pandey et al. \cite{pandey2025approximate} developed $\varepsilon$-convexificator techniques to obtain both necessary and sufficient optimality conditions for multiobjective fractional bilevel problems. 

As usual in optimization, convexity and differentiability are desired properties for studying mathematical models, since local properties are also global, and using differentiability, we could obtain optimality conditions which are easy to check. However, these properties often fail for nonconvex and/or nonsmooth functions, the usual tools from convex analysis fail for providing adequate information as optimality and, for these reasons, the researchers have been developed several notions of generalized convexity and generalized differentiability, in order to obtain adequate information from the data even in these hard frameworks.

This paper focuses on fractional objective functions, which are inherently nonconvex. For instance, even when differentiable functions when the numerator is convex and the denominator is affine, the fractional function is not even convex, but it is pseudoconvex (see \cite[Theorem 3.2.10]{CM-Book}). Fractional functions have been studied extensively in the literature due to their applications in various fields, such as in economics for maximizing return/risk or minimizing cost/time (see \cite{CM-Book,SCH,STA} and the references therein).

Furthermore, in this paper we analyze our main problem without differentiability assumptions and, to that end, we use the convexificator notion \cite{jeyakumar1999nonsmooth} 
for studying the multiobjective fractional bilevel optimization problem $(A)$. 
Recall that the convexificator is an extension of the sub\-di\-ffe\-rential notion. For instance, for locally Lipschitz functions, most well-known subdifferentials 
are convexificators, and these subdifferentials may encompass the convex hull 
of a convexificator by \cite{jeyakumar1999nonsmooth} (see also \cite{KS}). 
However, an important drawback is that convexificators may be unbounded 
for discontinuous functions. For this reason, the authors in
\cite{dempe2015necessary} introduced the concept of directional convexificators
motivated for lower semicontinuous (lsc henceforth) functions. This notion takes into account the directions where 
the function exhibits continuity and it has been proved to be useful for providing
optimality conditions in optimistic bilevel programming problems. Furthermore, 
the authors in \cite{gadhi2021optimality} provided optimality conditions for a 
set-valued optimization problem using support functions of set-valued mappings 
while in \cite{gadhi2022optimality} optimality conditions for mpecs in terms of
directional upper convexificators were established. We refer to
\cite{el2022applying,gadhi2022variational,lafhim2023optimality} for recent 
advances on directional convexificators.



\begin{table}[h!]
\centering
\begin{tabular}{|p{2.1cm}|p{3.8cm}|p{2.4cm}|p{2.6cm}|}
\hline
\textbf{References} & \textbf{Bilevel Model} & \textbf{Tool} & \textbf{Results} \\ \hline

Kohli, B. \cite{kohli2023optimality} &
Multiobjective fractional bilevel programming problem &
Convexificator &
Necessary and sufficient condition \\ \hline

Saini, S., Kailey, N. \cite{saini2024duality} &
Multiobjective fractional bilevel programming problem &
Convexificator &
Mond–Weir dual \\ \hline

Pandey et al. \cite{pandey2025approximate} &
Multiobjective fractional bilevel programming problem &
$\varepsilon$-convexificator &
Necessary and sufficient optimality \\ \hline

In this paper &
Multiobjective fractional bilevel programming problem &
Directional convexificator &
Necessary, sufficient and duality \\ \hline

\end{tabular}
\caption{Comparison of different works on multiobjective fractional bilevel programming}
\label{tab:comparison}
\end{table}

By using a novel function $\Psi$ from \cite{gadhi2012necessary}, we reformulate 
the original problem \text{(A)} into a simple and single-level problem \text{$(A^*)$}.
Under a suitable constraint qualification, we derive the necessary optimality conditions based on directional convexificators. Furthermore, we leverage a generalized convexity concept from \cite{el2022applying} to obtain first order sufficient optimality conditions. Moreover, we formulate a Mond-Weir-type dual problem for \text{(A)} and we establish both weak and strong duality results. Finally, we illustrate our findings through concrete examples.

The structure of the paper is as follows: In Section \ref{sec:02}, we present preliminaries and basic definitions. In Section \ref{sec:03}, we develop necessary
optimality conditions while sufficient optimality conditions under generalized convexity notions are given in Section 4. Finally, in Section 5, we focus on Mond-Weir duality type results.

\section{Preliminaries}\label{sec:02}

We denote zero in $\mathbb{R}^{n}$ by $0_n := (0,\ldots, 0)$ and a sequence of positive real numbers $(t_k)$ approaching zero by $t_k \downarrow  0$. For a nonempty subset $S \subseteq \mathbb{R}^{n}$, we denote the distance function by $d(y,  S) := \inf_{a \in S} \, \lVert y - a\rVert$, while the notations of $bd\,S$, $int\,S$, $co\,S$, $cl\,S$, $pos\,S$ and $cone\,S$ means topological boundary, topological interior, convex hull, closure, convex cone (including the origin) and cone of $S$, respectively, where $cone\,S := \cup_{t \geq 0} \, t \, S$. The negative polar cone of $S$ is defined by
$$S^\circ := \{u \in \mathbb{R}^n : \langle u, x \rangle \leq 0, ~ \forall\ x \in S\}.$$
Let $S_1, S_2 \subseteq \mathbb{R}^{n}$ be two closed convex cones. Then $(S_1\cap S_2)^\circ = cl ~({S_1^{\circ} + S_2^{\circ}})$.

The contingent cone and the cone of weak feasible directions to the set $S$ at $\check{x} \in  cl~S$  are given, respectively, by
\begin{align*}
 & ~ T(S, \check{x}) := \big\{u \in \mathbb{R}^{n}: \, \exists\ t_k \downarrow  0,\ \exists\ u_k \rightarrow u \ \ \text{such that}\ \check{x} + 
 t_k u_k \in S,\ \forall\ k \geq 0 \big\}, \\
 & W (S, \check{x}) := \big\{u \in \mathbb{R}^{n}: \, \exists\ t_k 
 \downarrow  0 \ \ \text{such that}\ \check{x} +t_k u \in S,\ \forall\ k \geq 0 \big\}.
\end{align*}
Note that, for all $\check{x} \in  cl\,S$, we have
\begin{equation}\label{eq01}
  W (S, \check{x}) \subseteq T (S, \check{x}). 
\end{equation}
 The Normal cone $N_S(\check{x})$ to $S$ at $\check{x} \in  cl~S$ is given by $$N_S(\check{x}) = T (S, \check{x})^{\circ} = \big\{ u  \in \mathbb{R}^{n}: \langle  u, \rho \rangle \leq 0,~ \forall~  \rho \in T (S, \check{x}) \big\}.$$

 If $S$ is convex, then $T (S, \check{x})$ is convex and 

$$N_S(\check{x})= \{u\in \mathbb{R}^{n}: \langle u,~ x-\check{x} \rangle \leq 0, ~ \forall~ x \in S \}.$$

 It is said that a nonempty set $S \subseteq \mathbb{R}^{n}$ is locally 
 star-shaped at $\check{x} \in S,$ if for every $x \in S$, there exists a scalar
 $a (\check{x}, x) \in (0,1]$ such that 
 $$\check{x}+ \lambda (x-\check{x}) \in S, ~ \forall~ \lambda \in (0, a (\check{x},~x)).$$
 If $a (\check{x},~x) = 1$, then $S$ is star-shaped at $\check{x}$.

Every open or convex set is locally star-shaped at each of its points. For example, any cone is locally star-shaped at its vertex, even if it is not locally convex. According to \cite{kaur1983theoretical}, if a closed set $S$ is locally star-shaped at every point $\check{x} \in S$, then $S$ must be convex. Furthermore, the finite union or intersection of sets that are locally star-shaped at a common point $\check{x}$ is also locally star-shaped at $\check{x}$. Finally, if $S$ is locally star-shaped at $\check{x}$, then the tangent cone satisfies $T(S, \check{x}) = cl,W (S, \check{x})$.

In order to express the bilevel optimization problem $(A)$ as a single-level mathematical programming problem, we define the function $\Delta_S: \mathbb{R}^{n}\rightarrow \mathbb{R}_+$ by (see \cite{hiriart1979tangent} and also \cite{dempe2020optimality,ehrgott2005multicriteria})
\begin{equation*}
\Delta_S(x) = \left\{
\begin{array}{ccl}
 -d(x, \mathbb{R}^{n}\backslash S), & {\rm if} & x \in S, \\
 d(x,  S), & {\rm if} & x \in \mathbb{R}^{n}\backslash S.
\end{array}
\right.
\end{equation*}

\begin{lemma} {\rm (\cite{hiriart1979tangent,lafhim2018necessary})} \label{prp1}
 Let $S \subseteq \mathbb{R}^{n}\ (S \neq \mathbb{R}^{n})$ be a closed and 
 convex cone with ${\rm int}\,S \neq \emptyset$. Then $\Delta_S(x)$ is convex, 
 positively homogeneous, $1$-Lipschitzian and decreasing on $\mathbb{R}^{n}$ 
 with respect to the order introduced by $S$ and $0 \notin \partial\Delta_S(y)$.
 Furthermore, 
 $$ int\,(S) = \{ y \in \mathbb{R}^n :\Delta_S(y) <0 \},$$ 
 $$ (\mathbb{R}^{n}\backslash S) = \{ y \in \mathbb{R}^n :\Delta_S(y) >0 \},$$ 
 $$ bd\,(S) = \{ y \in \mathbb{R}^n :\Delta_S(y) = 0 \}.$$
\end{lemma}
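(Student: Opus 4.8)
\noindent \emph{Sketch of proof.} The plan is to obtain every assertion from one representation of $\Delta_S$ as a support-type function. Since $S$ is a closed convex cone with $S \neq \mathbb{R}^{n}$, its negative polar cone $S^{\circ}$ is a closed convex cone with $S^{\circ} \neq \{0_n\}$ (otherwise $S = S^{\circ\circ} = \mathbb{R}^{n}$), so the set $P := \{v \in S^{\circ} : \lVert v\rVert = 1\}$ is nonempty and compact, and I claim that
\begin{equation*}
 \Delta_S(x) = \max_{v \in P}\,\langle v, x\rangle \qquad \text{for every } x \in \mathbb{R}^{n}.
\end{equation*}
Granting this identity, convexity, positive homogeneity and the $1$-Lipschitz property come for free, because $\Delta_S$ is then the pointwise maximum over a compact index set of the maps $x \mapsto \langle v, x\rangle$, each of which is linear, positively homogeneous and $1$-Lipschitz.

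To prove the representation I would treat $x \notin S$ and $x \in S$ separately, using the Moreau decomposition of the closed convex cone $S$: each $x$ is written uniquely as $x = \Pi_S(x) + \Pi_{S^{\circ}}(x)$ with $\Pi_S(x) \in S$, $\Pi_{S^{\circ}}(x) \in S^{\circ}$ and $\langle \Pi_S(x), \Pi_{S^{\circ}}(x)\rangle = 0$, where $\Pi_S$ and $\Pi_{S^{\circ}}$ denote the metric projections onto $S$ and $S^{\circ}$. If $x \notin S$, then $d(x, S) = \lVert \Pi_{S^{\circ}}(x)\rVert > 0$ and, for every $v \in P$, $\langle v, x\rangle = \langle v, \Pi_S(x)\rangle + \langle v, \Pi_{S^{\circ}}(x)\rangle \leq \lVert \Pi_{S^{\circ}}(x)\rVert$ since $\langle v, \Pi_S(x)\rangle \leq 0$, with equality for $v = \Pi_{S^{\circ}}(x)/\lVert \Pi_{S^{\circ}}(x)\rVert \in P$, so $\max_{v\in P}\langle v, x\rangle = d(x,S) = \Delta_S(x)$. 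If $x \in S$, then for $r \geq 0$ the closed ball $\{x' : \lVert x' - x\rVert \leq r\}$ is contained in $S = S^{\circ\circ}$ if and only if $\langle v, x\rangle + r\lVert v\rVert \leq 0$ for every $v \in S^{\circ}$, i.e. if and only if $r \leq -\max_{v\in P}\langle v, x\rangle$; hence $d(x, \mathbb{R}^{n}\backslash S)$, being the supremum of such $r$, equals $-\max_{v\in P}\langle v, x\rangle$, and therefore $\Delta_S(x) = -d(x, \mathbb{R}^{n}\backslash S) = \max_{v\in P}\langle v, x\rangle$. This last case — matching the distance to the complement with the radius of the largest inscribed ball and hence with the support function — is the technical heart of the argument; it is also what makes convexity non-obvious, since from the definition one only sees that $\Delta_S$ agrees with $-d(\cdot, \mathbb{R}^{n}\backslash S)$ on $S$ and with $d(\cdot, S)$ on the complement, and it is not apparent that these pieces combine into a globally convex function.

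With the representation available, the remaining items are short. For the sign identities: if $x \notin S$ then $\Delta_S(x) = d(x, S) > 0$ (as $S$ is closed), while if $x \in S$ then $\Delta_S(x) = -d(x, \mathbb{R}^{n}\backslash S) \leq 0$, with strict inequality exactly when $d(x, \mathbb{R}^{n}\backslash S) > 0$, i.e. exactly when $x \notin \mathrm{cl}(\mathbb{R}^{n}\backslash S) = \mathbb{R}^{n}\backslash \mathrm{int}\,S$; this gives $\{x : \Delta_S(x) < 0\} = \mathrm{int}\,S$, $\{x : \Delta_S(x) > 0\} = \mathbb{R}^{n}\backslash S$ and $\{x : \Delta_S(x) = 0\} = S\backslash\mathrm{int}\,S = \mathrm{bd}\,S$. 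Monotonicity with respect to the order induced by $S$ is immediate from the representation: if $y - x \in S$ then $\langle v, y\rangle = \langle v, x\rangle + \langle v, y - x\rangle \leq \langle v, x\rangle$ for all $v \in S^{\circ}$, so $\Delta_S(y) \leq \Delta_S(x)$. Finally, this is where $\mathrm{int}\,S \neq \emptyset$ is used: picking $\bar x \in \mathrm{int}\,S$ we get $\Delta_S(\bar x) < 0$, and positive homogeneity yields $\Delta_S(t\bar x) = t\,\Delta_S(\bar x) \to -\infty$ as $t \to +\infty$, so $\Delta_S$ is unbounded below and has no global minimizer; since $0_n \in \partial\Delta_S(y)$ would certify that $y$ globally minimizes the convex function $\Delta_S$, we conclude $0_n \notin \partial\Delta_S(y)$ for every $y$.

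I would also note that positive homogeneity and the $1$-Lipschitz bound can be obtained straight from the definition — positive homogeneity because $S$ and $\mathbb{R}^{n}\backslash S$ are invariant under positive scaling, and the Lipschitz estimate by a short three-case argument whose only delicate case, $x \in S$ and $y \notin S$, is settled by choosing the point $z$ at which the segment $[x,y]$ leaves $S$ (so $z \in \mathrm{bd}\,S$) and adding $d(x, \mathbb{R}^{n}\backslash S) \leq \lVert x - z\rVert$ to $d(y, S) \leq \lVert y - z\rVert$. Only convexity genuinely requires the support-function representation.
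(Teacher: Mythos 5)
Your proposal is correct. Note, however, that the paper does not prove this lemma at all: it is quoted verbatim from the literature (Hiriart-Urruty's work on the oriented distance function and Lafhim et al.), so there is no in-paper argument to match against. What you supply is a genuine, self-contained derivation specialized to closed convex cones: the identity $\Delta_S(x)=\max_{v\in P}\langle v,x\rangle$ with $P=S^{\circ}\cap\{\lVert v\rVert=1\}$, proved via the Moreau decomposition off $S$ and the inscribed-ball/bipolar argument on $S$, and all checks there are sound (in particular the equivalence $B(x,r)\subseteq S^{\circ\circ}\Leftrightarrow\langle v,x\rangle+r\lVert v\rVert\le 0$ for all $v\in S^{\circ}$, and the attainment of the maximum on the compact set $P$, which needs exactly $S\neq\mathbb{R}^{n}$). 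Everything else then falls out of sublinearity: convexity, positive homogeneity, the $1$-Lipschitz bound, the monotonicity with respect to the order of $S$, the three level-set identities, and $0\notin\partial\Delta_S(y)$, for which you correctly isolate ${\rm int}\,S\neq\emptyset$ as the hypothesis that makes $\Delta_S$ unbounded below and hence minimizer-free. Compared with the cited source, which treats the oriented distance to an arbitrary convex set (where convexity of $\Delta_S$ is the delicate point and no support-function formula of this simple form is available), your cone-specific route buys a one-line proof of every listed property at the price of less generality; it would be a reasonable appendix-style proof if the authors wished to make the paper self-contained.
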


Now, let us recall the usual solution notions for the multiobjective optimization problem in our setting.

A pair $(\check{x},\check{y}) \in \mathcal{M}$ is said to be a:
 \begin{itemize}
  \item[$(i)$] weak Pareto solution of $($\text{A}$),$ if
   \begin{equation*}
    \frac{\mathcal{F}_k(x,y)}{\mathcal{G}_k(x,y)} - \frac{\mathcal{F}_k 
    ( \check{x}, \check{y} )}{\mathcal{G}_k( \check{x}, \check{y} )}\ \notin 
    - int\ {\mathbb{R}^n_+}, ~ \forall ~ (x,y) \in \mathcal{M},
   \end{equation*}
that is, there is no $(x^0, y^0) \in  \mathcal{M} $ such that:
  \begin{equation*}
   \frac{\mathcal{F}_k(x^0, y^0)}{\mathcal{G}_k(x^0, y^0)} < 
   \frac{\mathcal{F}_k( \check{x}, \check{y} )}{\mathcal{G}_k 
   (\check{x}, \check{y} )}, ~ \forall ~ k =1,\ldots,n,
  \end{equation*}

  \item[$(ii)$] local weak Pareto solution of $($\text{A}$),$ if there exist two 
  neighborhoods $U^0$ of $\check{x}$ and $V^0$ of $\check{y}$ such that:
 \begin{equation*}
  \frac{\mathcal{F}_k (x, y)}{\mathcal{G}_k (x,y)} - \frac{\mathcal{F}_k (\check{x}, \check{y} )}{\mathcal{G}_k (\check{x}, \check{y} )} \ \notin - int\ {\mathbb{R}^n_+}, ~ \forall \ (x,y) \in \mathcal{M}\cap (U^0 \times V^0),
 \end{equation*}
that is, there is no $(x^0, y^0) \in  \mathcal{M} \cap (U^0 \times V^0)$ such 
that:
\begin{equation*}
 \frac{\mathcal{F}_k (x^0, y^0)}{\mathcal{G}_k (x^0, y^0)} < \frac{\mathcal{F}_k (\check{x}, \check{y} )}{\mathcal{G}_k (\check{x}, \check{y} )}, ~ \forall ~ k =1,\ldots,n,
\end{equation*}
where $\mathcal{M} = \{(x, y) \in \mathbb{R}^{n_1} \times \mathbb{R}^{n_2}:
\mathcal{H}_j (x, y) \leq 0, ~ \forall ~ j \in J, ~ \forall ~ y \in \mathfrak{B} (x)\}$.
\end{itemize}

Let $h: \mathbb{R}^{n} \rightarrow \mathbb{R} \cup \{+\infty\}$ be a function and $x \in \mathbb{R}^{n}$ be such that $h (x) \in \mathbb{R}$. Then the upper and lower Dini derivatives of $h$ at $x$ in direction $d \in \mathbb{R}^{n}$ are defined by
$$h^- (x;d) = \underset{t \searrow 0}{\mathop{\lim \inf}} \frac{h (x+td) - h (x)}{t},~~~~ h^ + (x;d) = \underset{t \searrow 0}{\mathop{\lim \sup}} \frac{h (x+td) - h (x)}{t},$$
respectively. If $h: \mathbb{R}^{n} \rightarrow {\mathbb{R}}$ is 
locally Lipschitz, then both of them exist.

 A vector $d \in \mathbb{R}^{n}$ is said to be a continuity direction of $h$ at $x \in \mathbb{R}^{n}$ if, for any real sequences $\{p_k\} \subset \mathbb{R}$ with $\{p_k\} \searrow 0$, we have  
 $$ \lim_{k \rightarrow \infty} h (x+p_kd)= h (x).$$
 We denote by $D(x)$ to the set of all continuity directions of $h$ at $x$.

Clearly, $D(x)$ is a nonempty cone (always contains $0_n$), but it is not nece\-ssa\-ri\-ly neither closed nor convex. When there is no potential confusion, we will denote $D(x)$ as $D$. Furthermore, note that the Frechet normal cone to $D$ at $\bar{d}=0_n$ is given by $N_D(0_n)=T(D, 0_n)^\circ$. 


 Let $h: \mathbb{R}^{n} \rightarrow \overline{\mathbb{R}}$ be a
 function. Then it is said that:
 \begin{itemize}
  \item[$(i)$] $h$ admit a directional upper convexificator (DUCF)
  ${\partial}^{u}_D h (x)$ at $x$ if the set ${\partial}^{u}_D h (x) \subseteq \mathbb{R}^{n}$ is closed and satisfies
  \begin{equation}
   h^- (x,d) \leq \underset{x^* \in {\partial}^{u}_D h (x)}{\mathop{\sup}}  \langle x^*, d\rangle, ~ \forall ~ d \in D.
  \end{equation}

  \item[$(ii)$] $h$ admit a directional upper semi-regular convexificator (DUSRCF) ${\partial}^{us}_D h (x)$ at $x$ if the set ${\partial}^{us}_D h (x) \subseteq \mathbb{R}^{n}$ is closed and satisfies
 \begin{equation}\label{eq2}
  h^+ (x, d) \leq \underset{x^* \in {\partial}^{us}_D h (x)}{\mathop{\sup}}  \langle x^*, d\rangle, ~ \forall ~ d \in D.
 \end{equation}
 \end{itemize}
Since $h^- (x,d) \leq h^{+} (x,d)$ for all $d \in D$, a directional upper semiregular convexificator is also a directional upper convexificator of $f$ at $x$.

If $h$ is continuous at a point $x$, then the directional convexificator 
collapses to the usual convexificator and the set of all possible directions $D$ is
$\mathbb{R}^{n}$, but these notions are not equivalent (see \cite[Example 2.10]{gadhi2022optimality}). Furthermore, the directional upper (resp. upper semi-regular) convexificator coincides with the standard upper 
(resp. upper semi-regular) convexificator. Moreover, if the inequality (\ref{eq2}) 
becomes equality for every $d \in D$, then ${\partial}^{us}_D h (x)$ is called a directional upper regular convexificator of $h$ at $x$. For further details we refer to \cite[Definition 3]{dempe2015necessary}.

For establishing the optimality conditions presented in Section 3, we introduce the following assumption: 
\begin{itemize}
 \item[$(H_0)$] Let the function $h: \mathbb{R}^n \rightarrow \mathbb{R}$ be lsc and the set $D(x)$ denote all continuity directions of $h$ at $x$. Then
$$ \exists\, \delta>0, \forall\, d \in D(x), \forall\, \bar{x} \in (x, x+\delta d) : \lim_{t \searrow 0} h (\bar{x}+td) =\lim_{t \searrow 0} h (\bar{x} -td) = h (\bar{x}). $$
\end{itemize}


The following outcome presents a chain rule for the directional semi-regular convexificator. Since its proof is similar to the one of  \cite[Proposition 1]{dempe2015necessary}, it is omitted. As usual, we write usc for upper semicontinuous.

\begin{lemma}\label{pro1}
 Let $g :\mathbb{R}^{n} \rightarrow \mathbb{R}$ be a continuous function and $h = (h_1, \ldots, h_n)$ be such that every $h_i: \mathbb{R}^{p} \rightarrow \mathbb{R}$ is lsc. Suppose that for every $i= 1,\dots,n,$ $h_i$ admits a bounded directional semi-regular convexificator ${\partial}^*_{D_i} h_i (\check{x})$ at $\check{x}$ with continuity direction $D_i(\check{x})$ and that $g$ admits a bounded semi-regular convexificator ${\partial}^* g (h (\check{x})) $ at $h (\check{x})$. Furthermore, suppose that assumption $(H_0)$ is satisfied for each $h_i$ at $\check{x}$. If for every $i= 1,\dots,n,$ the directional semi-regular convexificator ${\partial}^*_{D_i} h_i (\cdot)$ is usc at $\check{x}$ on $U(\check{x}) \cap (\{\check{x}\} + \bar{D})$ (where $U(\check{x})$ denotes an open neighborhood of $\check{x}$ and $\bar{D} = \bigcap_{i=1}^n D_i(\check{x}) \neq \{\textbf{0}_p\}$) and ${\partial}^* g (\cdot)$ is usc at $h(\check{x})$, then 
 \begin{align*}
 {\partial}^*_{\bar{D}} (g \circ h) (\check{x}) & = {\partial}^* g \Big(h (\check{x})\Big) \Big({\partial}^*_{D_1} h_1 (\check{x}), \dots, {\partial}^*_{D_n} h_n (\check{x})\Big) \\ 
 & =\Big\{ \sum_{i=1}^{n} a_ih_i :~ a \in {\partial}^* g \big(h (\check{x})\big), h_i \in {\partial}^*_{D_i} h_i (\check{x}) \Big\},
 \end{align*}
 is a directional semi-regular convexificator of $g \circ h$ at $\check{x}$.
\end{lemma}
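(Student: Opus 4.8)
The plan is to show that the set
$\Lambda := \big\{\, \sum_{i=1}^{n} a_i \xi_i : a \in \partial^* g(h(\check{x})),\ \xi_i \in \partial^*_{D_i} h_i(\check{x}) \,\big\}$
— which is precisely $\partial^* g\big(h(\check{x})\big)\big(\partial^*_{D_1} h_1(\check{x}), \dots, \partial^*_{D_n} h_n(\check{x})\big)$ — is closed and obeys the inequality defining a directional semi-regular convexificator of $g \circ h$ at $\check{x}$ with continuity directions $\bar D$, namely $(g \circ h)^+(\check{x}; d) \le \sup_{z^* \in \Lambda} \langle z^*, d \rangle$ for every $d \in \bar D$. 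Closedness is immediate: each $\partial^*_{D_i} h_i(\check{x})$ and $\partial^* g(h(\check{x}))$ is closed (being a convexificator) and bounded, hence compact, so $\Lambda$, the image of their product under the continuous map $(a, \xi_1, \dots, \xi_n) \mapsto \sum_i a_i \xi_i$, is compact and in particular closed.

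For the inequality, fix $d \in \bar D$ with $d \ne 0$. Since $d \in D_i(\check{x})$ for each $i$, $h_i(\check{x} + td) \to h_i(\check{x})$ as $t \searrow 0$; moreover $(H_0)$ makes $t \mapsto h_i(\check{x} + td)$ continuous on some $[0, \delta)$ and shows that, for $0 < s < \delta$, both $d$ and $-d$ are continuity directions of $h_i$ at $\check{x} + sd$. Combining this with the boundedness and the upper semicontinuity of $\partial^*_{D_i} h_i(\cdot)$ along $\{\check{x}\} + \bar D$ and the usual Dini-monotonicity argument for continuous real functions, one gets a constant $M > 0$ and $\delta' \in (0, \delta]$ such that $\big|\, \frac{h_i(\check{x} + td) - h_i(\check{x})}{t} \,\big| \le M$ for $0 < t < \delta'$ and all $i$. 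Now pick $t_k \searrow 0$ with $(g \circ h)^+(\check{x}; d) = \lim_k \frac{g(y_k) - g(y_0)}{t_k}$, where $y_k := h(\check{x} + t_k d)$ and $y_0 := h(\check{x})$, so $y_k \to y_0$. As $\partial^* g(\cdot)$ is, by hypothesis, defined and usc near $y_0$ with the bounded value $\partial^* g(y_0)$, it is bounded on a neighbourhood of $y_0$, and there $g$ — being continuous with a bounded convexificator at each point — is locally Lipschitz; a mean value inequality for convexificators (cf. \cite{jeyakumar1999nonsmooth}) applied to $g$ on the segment $[y_0, y_k]$ produces $c_k \in (y_0, y_k)$ and $a_k \in \partial^* g(c_k)$ with $g(y_k) - g(y_0) \le \langle a_k, y_k - y_0 \rangle$.

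Because $c_k \to y_0$ and $\partial^* g(\cdot)$ is usc, closed-valued and locally bounded near $y_0$, a subsequence of $(a_k)$ converges to some $\bar a \in \partial^* g(y_0)$; and since the difference quotients $v_k := (y_k - y_0)/t_k$ are bounded by the estimate above, along a further subsequence $v_k \to \bar v$ with $\bar v_i \le h_i^+(\check{x}; d) \le \max_{\xi \in \partial^*_{D_i} h_i(\check{x})} \langle \xi, d \rangle$. Dividing $g(y_k) - g(y_0) \le \langle a_k, y_k - y_0 \rangle$ by $t_k$ and letting $k \to \infty$ gives $(g \circ h)^+(\check{x}; d) \le \langle \bar a, \bar v \rangle$, and picking for each $i$ a suitable maximizer $\xi_i \in \partial^*_{D_i} h_i(\check{x})$ of $\langle \cdot, d \rangle$ yields $\langle \bar a, \bar v \rangle \le \big\langle \sum_{i=1}^{n} \bar a_i \xi_i,\, d \big\rangle \le \sup_{z^* \in \Lambda} \langle z^*, d \rangle$, which is the desired inequality.

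The main obstacle, and the place where $(H_0)$ is genuinely needed, is the last estimate $\langle \bar a, \bar v \rangle \le \big\langle \sum_{i} \bar a_i \xi_i, d \big\rangle$: the semi-regular convexificators of the $h_i$ control the inner difference quotients only from above, so a component $\bar a_i < 0$ also requires a matching lower bound on $(v_k)_i$, which is exactly what the two-sided continuity in $(H_0)$ — together with the boundedness and usc of $\partial^*_{D_i} h_i$ — supplies. This limiting passage is the technical heart of the statement and is carried out exactly as in \cite[Proposition 1]{dempe2015necessary}, which is why the details are omitted; note that in the single-level reformulations used later the outer function $g$ is monotone, so $\partial^* g(y_0) \subseteq \mathbb{R}^n_+$ and only the upper bounds above are needed.
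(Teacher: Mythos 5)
The paper itself gives no argument for this lemma: it states that the proof ``is similar to the one of \cite[Proposition 1]{dempe2015necessary}'' and omits it. Your sketch follows the strategy one would expect such a proof to take (compactness of $\Lambda$ for closedness, a mean-value inequality for the outer function $g$, and a limiting passage using boundedness and upper semicontinuity of the convexificator maps), so in spirit it is aligned with the omitted argument. But judged as a standalone proof it has a genuine gap, and it is exactly the step you yourself flag: when a component $\bar a_i$ is negative you need a \emph{lower} estimate on the limiting inner difference quotient $\bar v_i$, of the form $\bar v_i \geq \inf_{\xi \in co\,\partial^*_{D_i} h_i(\check{x})} \langle \xi, d\rangle$. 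This does not follow from the definition of a directional upper semi-regular convexificator at $\check{x}$, which only bounds $h_i^{+}(\check{x};d)$ from above, and it is not supplied by ``two-sided continuity in $(H_0)$ together with boundedness and usc'' as a bare assertion. The way it is actually obtained is by a mean-value-type argument for each $h_i$ along the segment $[\check{x}, \check{x}+t_k d]$: by $(H_0)$ the map $s \mapsto h_i(\check{x}+sd)$ is continuous there and $\pm d$ are continuity directions at the interior points $\check{x}+sd$, so the semi-regular convexificators $\partial^*_{D_i} h_i(\check{x}+sd)$ applied in the directions $d$ and $-d$ control the one-dimensional Dini derivatives from both sides; the assumed upper semicontinuity of $\partial^*_{D_i} h_i(\cdot)$ on $U(\check{x}) \cap (\{\check{x}\}+\bar D)$ then collapses these bounds, as $t_k \searrow 0$, to $\inf$ and $\sup$ of $\langle \cdot, d\rangle$ over $co\,\partial^*_{D_i} h_i(\check{x})$, yielding $\bar v_i = \langle \xi_i, d\rangle$ for some $\xi_i \in co\,\partial^*_{D_i} h_i(\check{x})$. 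Notice that this is the only place where the usc hypothesis on the inner convexificator maps is genuinely used; your sketch invokes it only for a vague boundedness claim and then defers the decisive estimate to \cite[Proposition 1]{dempe2015necessary}, i.e.\ to the same citation behind which the paper hides the proof. Deferring the ``technical heart'' to the reference means the claim is not actually proved.

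Two smaller points. First, the assertion that $g$, ``being continuous with a bounded convexificator at each point, is locally Lipschitz'' is neither justified nor needed: the hypotheses only give a bounded semi-regular convexificator of $g$ at $h(\check{x})$ together with usc of $\partial^* g(\cdot)$ there (which yields local boundedness of the map), and the mean-value inequality for convexificators requires continuity on the segment plus convexificators at its points, not Lipschitzness. Second, the two-sided bound $\lvert (h_i(\check{x}+t d)-h_i(\check{x}))/t \rvert \le M$ that you attribute to a ``Dini-monotonicity argument'' again needs the segment-wise estimates just described, not merely the convexificator at $\check{x}$. Your closing remark that in the paper's application the outer function is monotone, so that $\partial^* g(h(\check{x})) \subseteq \mathbb{R}^n_+$ and only upper bounds are needed, is a sensible observation about how the lemma is used, but it does not repair the proof of the lemma as stated, which allows arbitrary signs in $\partial^* g(h(\check{x}))$.
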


We also recall the following classical result of convex analysis.

\begin{lemma}\label{lm1} {\rm (see \cite{Gadhi2021})}
 Assume that $ \mathcal{B} \subset \mathbb{R}^{n}$ be a nonempty, convex 
 and compact set and $\mathcal{A}\subset \mathbb{R}^{n}$ be a convex 
 cone and that 
 $$\underset{e \in \mathcal{B}} {\mathop{\sup}}\  \langle e, d \rangle \geq 0, ~ \forall ~ d \in \mathcal{A}^{\circ}.$$
 Then $0 \in \mathcal{B} + cl\mathcal{A}$.
\end{lemma}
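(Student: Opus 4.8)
The plan is to argue by contradiction, using a strict separation theorem and then exploiting the conical (positively homogeneous) structure of $\mathcal{A}$ to force the separating functional to lie in $\mathcal{A}^\circ$.

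First I would record the auxiliary fact that $\mathcal{N} := \mathcal{B} + cl\,\mathcal{A}$ is a closed convex set. Convexity is immediate since both $\mathcal{B}$ and $cl\,\mathcal{A}$ are convex (the closure of a convex cone is again a convex cone, so $cl\,\mathcal{A}$ is a closed convex cone containing $0_n$). Closedness follows from the classical fact that the Minkowski sum of a compact set and a closed set is closed; this is exactly the place where the compactness hypothesis on $\mathcal{B}$ is used, and it is indispensable.

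Next, assume toward a contradiction that $0_n \notin \mathcal{N}$. Since $\mathcal{N}$ is closed and convex and $\{0_n\}$ is compact and disjoint from $\mathcal{N}$, strict separation (e.g. projecting $0_n$ onto $\mathcal{N}$) yields a vector $d \in \mathbb{R}^n \setminus \{0_n\}$ and a scalar $\alpha > 0$ with $\langle d, z\rangle \geq \alpha$ for every $z \in \mathcal{N}$, i.e.
\[
\langle d, b\rangle + \langle d, a\rangle \geq \alpha, \qquad \forall\, b \in \mathcal{B},\ \forall\, a \in cl\,\mathcal{A}.
\]
Fixing $b \in \mathcal{B}$ and $a \in cl\,\mathcal{A}$ and replacing $a$ by $ta$ with $t \to +\infty$ (allowed because $cl\,\mathcal{A}$ is a cone) forces $\langle d, a\rangle \geq 0$ for all $a \in cl\,\mathcal{A}$, hence for all $a \in \mathcal{A}$, which is precisely $-d \in \mathcal{A}^\circ$. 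Taking instead $a = 0_n$ gives $\langle d, b\rangle \geq \alpha$ for all $b \in \mathcal{B}$, whence $\langle -d, b\rangle \leq -\alpha < 0$ for all $b \in \mathcal{B}$ and therefore $\sup_{e \in \mathcal{B}} \langle e, -d\rangle \leq -\alpha < 0$.

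This contradicts the standing hypothesis $\sup_{e \in \mathcal{B}} \langle e, d'\rangle \geq 0$ for all $d' \in \mathcal{A}^\circ$, applied with $d' = -d$. Hence $0_n \in \mathcal{B} + cl\,\mathcal{A}$, which is the claim. As noted, the only genuinely delicate step is the closedness of $\mathcal{B} + cl\,\mathcal{A}$; the remainder is a routine combination of the separation theorem with the positive homogeneity of the cone and the definition of the polar cone.
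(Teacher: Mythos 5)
Your argument is correct: the sum of the compact convex set $\mathcal{B}$ and the closed convex cone $cl\,\mathcal{A}$ is closed and convex, strict separation of $0_n$ from it produces a $d$ with $\langle d,a\rangle\geq 0$ on $cl\,\mathcal{A}$ (so $-d\in\mathcal{A}^{\circ}$) and $\langle d,b\rangle\geq\alpha>0$ on $\mathcal{B}$, and this contradicts the hypothesis applied to $-d$. Note that the paper does not prove Lemma \ref{lm1} at all; it simply recalls it as a classical result with a citation to \cite{Gadhi2021}, so there is no in-paper proof to compare against --- your separation-plus-homogeneity argument is the standard one for this fact, and you correctly flag that compactness of $\mathcal{B}$ is what guarantees closedness of the Minkowski sum (the only step that would fail without it).
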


\section{Necessary optimality conditions}\label{sec:03}

In order to provide necessary optimality conditions for a solution to be local 
weak Pareto solution of problem $($\text{A}$)$ using directional convexificators, 
we introduce a single-level problem.

Let $x \in \mathbb{R}^{n_1}$ and $Y(x) = \{ y \in \mathbb{R}^{n_2}: \, \phi_s(x,y) \leq 0, ~ \forall ~ s \in S\}$ be the feasible region of the lower level problem $(\text{A}_{x})$. In what follows, we assume that the set-valued map $Y$ is uniformly bounded around $\check{x}$, that is, there exists a bounded neighborhood $\mathbb{U} (\check{x}, \check{y})$ of $(\check{x}, \check{y})$ such that $\bigcup_{x \in \mathbb{U}} Y(x)$ is bounded. Here $\mathbb{U}$ is defined as
\begin{equation*}
 \mathbb{U} := \{x \in \mathbb{R}^{n_1}: \exists\ y \in \mathbb{R}^{n_2} \ 
 \text{with} \ (x,y) \in \mathbb{U}(\check{x},\check{y}) \}.
\end{equation*}

Let $\bar{\mathcal{B}}_{\mathbb{R}^{n_2}}$ be the closed unit ball of
$\mathbb{R}^{n_2}$ and $\mathbb{U}_{\check{x}} := U^0 \cap \mathbb{U}$ 
(where $U^0$ is a neighborhood of $\check{x}$). Then  $cl\left(\bigcup_{x \in
\mathbb{U}_{\check{x}}} Y(x)\right)$ is bounded and $cl\left(\bigcup_{x \in \mathbb{U}_{\check{x}}} Y(x)\right)$ is com\-pact. Take
\begin{equation*}
 \Theta = cl\left(\bigcup_{x \in \mathbb{U}_{\check{x}}} Y (x) \right) 
 + \bar{\mathcal{B}}_{\mathbb{R}^{n_2}}.
\end{equation*}
Then $\Theta$ is a nonempty compact set which includes an open neighborhood of $cl\left(\bigcup_{x \in \mathbb{U}_{\check{x}}} Y(x)\right)$ by
\cite{gadhi2012necessary}. This allow us to establish a real-valued function $\Psi: \mathbb{R}^{n_1} \times \mathbb{R}^{n_2} 
\rightarrow  \mathbb{R}$ such that
\begin{equation*}
 \Psi(x,y) = \underset{z \in \Theta}{\max}\ \psi(x,y,z), ~ \forall ~ (x,y)\in \mathbb{R}^{n_1}\times \mathbb{R}^{n_2},
\end{equation*}
where $\psi : \mathbb{R}^{n_1}\times \mathbb{R}^{n_2}\times \Theta 
\rightarrow \mathbb{R}$ is given by
\begin{equation*}
 \psi (x, y, z) := \min\{f(x,y)-f(x,z),~ -\Delta_{(-\mathbb{R}_+^q)} (\phi_1 (x, z), 
 \dots, \phi_q(x,z))\}.
\end{equation*}

Following \cite[Lemmas 3.1, 3.2 and 3.3]{gadhi2012necessary}, we known that a local weak efficient solution of $(A)$ is a local weak efficient solution of the single-level problem
\begin{equation*}
	(\text{$A^*$}):\quad 
	\begin{cases}
		\underset{\qquad \ x, y}{\mathop{\mathbb{R}^{n}_+ - \min }} \ \frac{\mathcal{F}(x,y)}{\mathcal{G}(x,y)}= \left(\frac{\mathcal{F}_1(x,y)}{\mathcal{G}_1(x,y)},  \dots,\frac{\mathcal{F}_n(x,y)}{ \mathcal{G}_n(x,y)}\right)\\
		s. \ t. ~~ (x,y) \in E
	\end{cases}
\end{equation*}\
where
\begin{equation*}
 E = 
  \begin{cases}
  (x, y) \in \mathbb{R}^{n_1} \times \mathbb{R}^{n_2}, \\
  {\mathcal{H}_j} (x,y) \leq 0, ~ \forall ~ j \in J = \{1,\ldots,p\}, \\
  ~ {\phi_s} (x,y) \leq 0, ~ \forall ~ s \in S = \{1,\ldots,q\}, \\
  ~\, \Psi(x,y) \leq 0. \\ 
 \end{cases}
\end{equation*}

Following \cite[Remark 3.1]{gadhi2012necessary}, we analyze the existence of solution for problem \text{(A)} in the next remark. To that end, we define $\Phi_k (\check{x}, \check{y}) = \frac{\mathcal{F}_k (\check{x}, \check{y})} {\mathcal{G}_k (\check{x}, \check{y})}$ and $\varphi_k (x,y) = \mathcal{F}_k (x, y) - \Phi_k (\check{x}, \check{y}) \mathcal{G}_k (x, y)$ for every $k= 1,\ldots,n$.

\begin{remark}
 If the following assumptions holds:
 \begin{itemize}
  \item[$(T_1)$] $\Phi_k$, $k=1, \dots, n$, $\mathcal{H}_j$, $j \in J$, $\phi_s$, $s \in S$ and $\Psi$ are lsc on $\mathbb{R}^{n_1}\times \mathbb{R}^{n_2}$.
   		
 \item[$(T_2)$] Problem (\text{$A^*$}) has at least one solution and its feasible set is bounded.
\end{itemize}
 then problem \text{(A)} has at least one optimal solution (see 
 \cite[Remark 3.1]{gadhi2012necessary}). In particular, under assumptions $(T_{1})$ and $(T_{2})$, the feasible set $E$ is a nonempty compact set, and the function $\Phi_k$ is lsc for all $k=1,\dots, n$.
\end{remark}

Now, let us consider a nonempty cone $D$ of $\mathbb{R}^{n_1} \times 
\mathbb{R}^{n_2}$ such that 
$$ D =  \left( \bigcap_{k=1}^{n} D_{\Phi_k} \right) \cap \left( \bigcap_{j \in J} D_{\mathcal{H}_j} \right) \cap \left(\bigcap_{s \in S} D_{\phi_s} \right) \cap \, D_{\Psi},$$
where $D_{\Phi_k}$, $D_{\mathcal{H}_j}$, $D_{\phi_s}$, and 
$D_{\Psi}$ are the sets of all continuity directions of $\Phi_k$, $\mathcal{H}_j$, 
$\phi_s$ and $\Psi$ at ($\check{x},\check{y}$), respectively. 

Let $(\check{x},\check{y}) \in E$ and the sets
\begin{align*}
 & J_0 (\check{x},\check{y})= \{j \in J : \mathcal{H}_j 
 (\check{x},\check{y})=0\}, \\
 & S_0(\check{x},\check{y})= \{s \in S : \phi_s (\check{x},\check{y})=0\}.
\end{align*}

Then we introduce the following constraint qualification.

\begin{definition}
 \textbf{[${\partial}_D$-nonsmooth ACQ]:} Assuming that all the functions in $E$ admit (DUCF) ${\partial}^{u}_D \mathcal{H}_j (\check{x}, \check{y})$ (with $j \in J$), ${\partial}^{u}_D \phi_s (\check{x}, \check{y})$ (with $s \in S$) at $(\check{x}, \check{y}) \in E$ and ${\partial}^{u}_D \Psi (\check{x}, \check{y})$ at
 $(\check{x}, \check{y}) \in E$. We introduce the following notation: 
 \begin{align*} 
  & \hspace{1.0cm} \, \Xi (\check{x}, \check{y}) := \mathbb{C} (\check{x}, 
 \check{y}) \cup N_D (0_{n_1+n_2}), \\
 & ~ T_{D} (E, (\check{x}, \check{y})) := T (E, (\check{x}, \check{y})) \cap D, \\ 
 & W_D (E, (\check{x}, \check{y})) := W (E, (\check{x}, \check{y})) \cap D,
\end{align*}
where
\begin{equation*}
 \mathbb{C} (\check{x},\check{y}) = \left(\bigcup_{j \in J_0 (\check{x}, \check{y})} {\partial}^{u}_D \mathcal{H}_j (\check{x}, 
 \check{y}) \right) \cup \left( \bigcup_{j \in S_0 (\check{x}, \check{y})}  
 {\partial}^{u}_D \phi_s (\check{x}, \check{y}) \right) \cup \ {\partial}^{u}_D 
 \Psi (\check{x},\check{y}).
\end{equation*}
It is said that the generalized ${\partial}_D$-nonsmooth ACQ holds at $(\check{x},\check{y}) \in E$ if 
\begin{equation*}
 [\Xi (\check{x}, \check{y})]^{\circ}  \subseteq T_{D} (E, (\check{x}, \check{y})).
\end{equation*} 
\end{definition}

In the following remark, we analyze the importance of the cone $N_{D} (0_{n_1+n_2})$ in the previous definition.

\begin{remark} 
 Note that in our definition of the cone $\Xi$, we have included $N_{D} (0_{n_1+n_2})$, which is not standard in the literature, because its play a fundamental role in our proof. Furthermore, we emphasize that when the functions \(\mathcal{H}_j\) \((j \in J)\), \(\phi_s\) \((s \in S)\), and \(\Psi\) are continuous at \((\check{x}, \check{y})\), we have \(D = \mathbb{R}^{n_1} \times \mathbb{R}^{n_2}\). Consequently, the directional upper convexificators at \((\check{x}, \check{y})\) (namely, \({\partial}^{u}_D \mathcal{H}_j (\check{x}, \check{y})\), \({\partial}^{u}_D \phi_s (\check{x}, \check{y})\), and \({\partial}^{u}_D \Psi (\check{x}, \check{y})\)), coincide with their standard upper convexificators \({\partial}^{u} \mathcal{H}_j (\check{x}, \check{y})\), \({\partial}^{u} \phi_s (\check{x}, \check{y})\), and \({\partial}^{u} \Psi (\check{x}, \check{y})\).

 In this case, the set \(\Xi (\check{x}, \check{y})\) reduces to \(\mathbb{C} (\check{x}, \check{y})\), and one recovers the nonsmooth Abadie-type constraint qualification introduced in \cite[De\-fi\-ni\-tion 4.1]{lafhim2018necessary}.
\end{remark}

Our first main result, which provides necessary optimality conditions for a local weak Pareto solution for problem \text{(A)}, is given below.

\begin{theorem}\label{thm N1}
 Let $(\check{x}, \check{y}) \in E$ be a local weak Pareto solution for problem \text{(A)}. Suppose that ${\partial}_D$-nonsmooth ACQ holds at $(\check{x}, \check{y})$ and $\mathcal{F}_k,\ (-\mathcal{G}_k)$ admit bounded
$($DUSRCF$)$ ${\partial}^{us}_D \mathcal{F}_k (\check{x},\check{y})$, ${\partial}^{us}_D (-\mathcal{G}_k) (\check{x},\check{y})$ at 
$(\check{x}, \check{y})$, with $k = 1,\dots,n$, respectively, $\mathcal{H}_j \ (j \in J =\{1,\dots,p\} ),\ \phi_s \ (s \in S = \{1,\dots,q\})$ and $\Psi$ admit (DUCF) ${\partial}^{u}_D \mathcal{H}_j 
(\check{x},\check{y}),\ {\partial}^{u}_D \phi_s (\check{x},\check{y})$, ${\partial}^{u}_D \Psi (\check{x},\check{y})$ at $(\check{x}, \check{y})$, 
respectively and that ${\partial}^{us}_D \mathcal{F}_k (\check{x},\check{y})$, 
${\partial}^{us}_D (-\mathcal{G}_k) (\check{x},\check{y})$ ${\partial}^{u}_D \mathcal{H}_j 
(\check{x},\check{y}),\ {\partial}^{u}_D \phi_s (\check{x},\check{y})$, ${\partial}^{u}_D \Psi (\check{x},\check{y})$ are usc at $(\check{x},\check{y})$. If the set $E$ is locally star-shaped at 
$(\check{x}, \check{y})$ and $pos~\Xi (\check{x},\check{y})$ is closed, then 
there exists $\delta^* = (\xi^*, \tau^*, \rho^*, \eta^*) \in \mathbb{R}_+^{(n+p+q+1)}$, with $\xi^* \neq 0_{\mathbb{R}^n}$, such that 
\begin{align}
 & (0,0) \in \sum_{k=1}^{n} \xi^*_k  \partial^{us}_D \varphi_k (x,y) + \sum_{j=1}^{p} \tau_j^* \ \partial^{u}_D \mathcal{H}_j (\check{x}, \check{y})+ \sum_{s=1}^{q} \rho^*_s \ {\partial}^{u}_D \phi_s (\check{x}, \check{y})  \notag \\
 & \hspace{1.0cm} + \eta^*\ {\partial}^{u}_D \Psi (\check{x}, \check{y}) + N_D (0_{n_1+n_2}). \label{eq3} \\
 & \tau_j^* \mathcal{H}_j (\check{x}, \check{y}) =0, ~ \forall ~ j \in J, \notag \\
 & \rho^*_s \phi_s (\check{x}, \check{y}) = 0, ~ \forall ~ s \in S. \notag
 \end{align}
\end{theorem}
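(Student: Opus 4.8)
The plan is to reduce the fractional problem $(A)$ to the surrogate single-level problem $(A^*)$, then linearize the objectives at $(\check{x},\check{y})$ using the functions $\varphi_k$, and finally run a contingent-cone/separation argument restricted to the continuity-direction cone $D$. First I would use the fact (from \cite[Lemmas 3.1--3.3]{gadhi2012necessary}, quoted above) that $(\check{x},\check{y})$ is a local weak Pareto solution of $(A^*)$, and then invoke the standard fractional-to-linear device: since $\mathcal{G}_k>0$ on $\mathcal{M}$, the inequality $\frac{\mathcal{F}_k(x,y)}{\mathcal{G}_k(x,y)}<\Phi_k(\check{x},\check{y})$ is equivalent to $\varphi_k(x,y)=\mathcal{F}_k(x,y)-\Phi_k(\check{x},\check{y})\mathcal{G}_k(x,y)<0$. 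Hence $(\check{x},\check{y})$ is a local weak Pareto solution of the linearized multiobjective problem with objectives $(\varphi_1,\dots,\varphi_n)$ over $E$, and note $\varphi_k(\check{x},\check{y})=0$ for all $k$.

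Next I would translate local weak Pareto optimality into a statement about directions. The key geometric claim is: for every $d=(d_1,d_2)\in T_D(E,(\check{x},\check{y}))=T(E,(\check{x},\check{y}))\cap D$, there is some index $k$ with $\varphi_k^-((\check{x},\check{y});d)\ge 0$; equivalently, the system $\varphi_k^-((\check{x},\check{y});d)<0$ for all $k$ has no solution $d$ in $T_D(E,(\check{x},\check{y}))$. This follows by taking $t_j\downarrow 0$, $u_j\to d$ with $(\check{x},\check{y})+t_j u_j\in E$ witnessing membership in the contingent cone, using the local star-shapedness of $E$ (so that $T(E,(\check{x},\check{y}))=\mathrm{cl}\,W(E,(\check{x},\check{y}))$) to actually move along the direction $d$ itself inside $E$, and exploiting $d\in D$ so that the relevant functions are continuous along $d$ at $(\check{x},\check{y})$; negativity of all the Dini derivatives would then produce feasible points strictly dominating $(\check{x},\check{y})$, contradicting local weak Pareto optimality. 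Since $\varphi_k$ admits the bounded $\mathrm{DUSRCF}$ obtained from Lemma \ref{pro1} applied to $\mathcal{F}_k$ and $-\mathcal{G}_k$ (with the chain rule giving $\partial^{us}_D\varphi_k(\check{x},\check{y})\subseteq \partial^{us}_D\mathcal{F}_k(\check{x},\check{y})+\Phi_k(\check{x},\check{y})\,\partial^{us}_D(-\mathcal{G}_k)(\check{x},\check{y})$), we have $\varphi_k^+((\check{x},\check{y});d)\le \sup_{v\in\partial^{us}_D\varphi_k(\check{x},\check{y})}\langle v,d\rangle$, hence a fortiori the same upper bound for $\varphi_k^-$.

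Then I would set up the separation. Define $\mathcal{B}:=\mathrm{co}\big(\bigcup_{k=1}^n \partial^{us}_D\varphi_k(\check{x},\check{y})\big)$, a nonempty convex compact set (compactness from boundedness and closedness of the $\mathrm{DUSRCF}$s), and $\mathcal{A}:=\mathrm{pos}\,\Xi(\check{x},\check{y})$, a convex cone, closed by hypothesis. Combining the no-solution statement above with $\math{\partial}_D$-nonsmooth ACQ, which gives $\mathcal{A}^\circ=[\mathrm{pos}\,\Xi(\check{x},\check{y})]^\circ=[\Xi(\check{x},\check{y})]^\circ\subseteq T_D(E,(\check{x},\check{y}))$, one shows $\sup_{v\in\mathcal{B}}\langle v,d\rangle\ge 0$ for all $d\in\mathcal{A}^\circ$; the inclusion of $N_D(0_{n_1+n_2})$ inside $\Xi$ is exactly what guarantees $\mathcal{A}^\circ$ stays inside $D$ (since $N_D(0)^\circ\supseteq \overline{T(D,0)}\supseteq D$ in the relevant sense), so the Dini-derivative bounds are legitimately applicable. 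Lemma \ref{lm1} then yields $0\in\mathcal{B}+\mathrm{cl}\,\mathcal{A}=\mathcal{B}+\mathcal{A}$. Unpacking $0\in\mathcal{B}+\mathrm{pos}\,\Xi(\check{x},\check{y})$ via Carath\'eodory gives multipliers $\xi^*_k\ge 0$ (not all zero, since $0\notin\mathcal{B}$ would otherwise be needed — this requires the usual argument that if $\xi^*=0$ one gets $0\in\mathrm{pos}\,\mathbb{C}(\check{x},\check{y})\cup N_D(0)$ contradicting something, or one simply normalizes and keeps $\xi^*\ne 0$ by a closedness/compactness argument), together with nonnegative multipliers on $\partial^u_D\mathcal{H}_j$ for $j\in J_0$, on $\partial^u_D\phi_s$ for $s\in S_0$, on $\partial^u_D\Psi$, and an element of $N_D(0_{n_1+n_2})$, summing to zero. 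Extending the multipliers by zero to the inactive indices produces $\tau^*,\rho^*$ with the complementarity conditions $\tau^*_j\mathcal{H}_j(\check{x},\check{y})=0$, $\rho^*_s\phi_s(\check{x},\check{y})=0$, which is precisely \eqref{eq3} and the accompanying relations.

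The main obstacle I anticipate is the interplay between the continuity-direction cone $D$ and the contingent cone: one must be careful that all Dini-derivative inequalities are only valid for $d\in D$, that $T_D(E,\cdot)$ rather than $T(E,\cdot)$ is the right object, and — crucially — that after the separation the resulting direction set $\mathcal{A}^\circ$ does not escape $D$; this is the technical reason $N_D(0_{n_1+n_2})$ was inserted into $\Xi$, and verifying that this insertion does the job (i.e. $[\mathbb{C}(\check{x},\check{y})\cup N_D(0_{n_1+n_2})]^\circ\subseteq D$, so that the ACQ inclusion into $T_D$ is consistent and the Dini bounds apply on all of $\mathcal{A}^\circ$) is the delicate point. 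A secondary difficulty is guaranteeing $\xi^*\ne 0_{\mathbb{R}^n}$ rather than merely $\delta^*\ne 0$; this needs the local weak Pareto structure to rule out the degenerate case where the objective multipliers all vanish, typically handled by noting that $0\notin\mathcal{B}$ cannot be forced and arguing directly from the "no strictly dominating direction" property.
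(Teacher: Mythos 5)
Your proposal coincides with the paper up to the reduction to the single-level problem and the Dinkelbach-type linearization via $\varphi_k$, but after that it takes a genuinely different route. The paper scalarizes with the signed distance function, setting $\mathcal{K}=\Delta_{-\mathrm{int}\,\mathbb{R}^n_+}\circ F$, applies the chain rule for directional semi-regular convexificators (Lemma \ref{pro1}) to obtain $\partial^{us}_D\mathcal{K}$, feeds $\mathrm{co}\,\partial^{us}_D\mathcal{K}$ into Lemma \ref{lm1}, and extracts $\xi^*\in\partial^{*}\Delta_{-\mathrm{int}\,\mathbb{R}^n_+}(0)$, with $\xi^*\neq 0$ coming from Lemma \ref{prp1}. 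You instead work with the multiobjective primal condition directly and feed $\mathcal{B}=\mathrm{co}\bigl(\bigcup_{k}\partial^{us}_D\varphi_k(\check{x},\check{y})\bigr)$ into Lemma \ref{lm1}. This is a legitimate alternative: it bypasses the chain rule (and hence, in principle, assumption $(H_0)$ and the upper semicontinuity hypotheses that Lemma \ref{pro1} requires), needing only the elementary sum rule showing that $\partial^{us}_D\mathcal{F}_k(\check{x},\check{y})+\Phi_k(\check{x},\check{y})\,\partial^{us}_D(-\mathcal{G}_k)(\check{x},\check{y})$ is a bounded DUSRCF of $\varphi_k$; moreover $\xi^*\neq 0_{\mathbb{R}^n}$ is automatic, since any element of the convex hull of a finite union carries convex weights summing to one, so your hedging on that point is unnecessary.

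Two steps, however, need repair. First, your key geometric claim is stated with the lower Dini derivative: that no $d\in T_D(E,(\check{x},\check{y}))$ satisfies $\varphi_k^-((\check{x},\check{y});d)<0$ for all $k$. The sketched justification does not prove this: $\varphi_k^-((\check{x},\check{y});d)<0$ only gives negative difference quotients along \emph{some} sequence $t\downarrow 0$ depending on $k$, and these sequences need not coincide across $k$ nor with the times at which $(\check{x},\check{y})+td\in E$, so no single strictly dominating feasible point is produced; with oscillating quotients the $\varphi^-$ version can genuinely fail. What is true, and is all you actually use, is the upper-Dini version: for $d\in W_D(E,(\check{x},\check{y}))$, if $\varphi_k^+((\check{x},\check{y});d)<0$ for all $k$, then every $\varphi_k$ strictly decreases along $d$ for all small $t$, contradicting local weak Pareto optimality at the feasible times $t_j$. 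Since the DUSRCF bounds $\varphi_k^+$, this still yields $\max_k\sup_{v\in\partial^{us}_D\varphi_k(\check{x},\check{y})}\langle v,d\rangle\geq 0$ on $W_D$, and it extends to $T_D=\mathrm{cl}\,W_D$ (local star-shapedness) by continuity of the max-of-support functions, which is where boundedness of the convexificators enters; this limiting step is the analogue of the one the paper performs with the Lipschitz function $\Delta$.

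Second, your justification that the polar of $\Xi$ ``stays inside $D$'' is backwards: from $[\Xi(\check{x},\check{y})]^\circ\subseteq N_D(0_{n_1+n_2})^\circ$ and $N_D(0_{n_1+n_2})^\circ\supseteq D$ one cannot conclude $[\Xi(\check{x},\check{y})]^\circ\subseteq D$. Fortunately no such argument is needed: the ${\partial}_D$-nonsmooth ACQ itself gives $[\Xi(\check{x},\check{y})]^\circ\subseteq T_D(E,(\check{x},\check{y}))\subseteq D$, so the support bounds apply on all of $[\Xi(\check{x},\check{y})]^\circ$; the real role of $N_D(0_{n_1+n_2})$ in $\Xi$ is to weaken the constraint qualification and, correspondingly, to appear as the extra additive term in \eqref{eq3}. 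With these two repairs your argument goes through to the stated conclusion (note that when unpacking $0\in\mathcal{B}+\mathrm{pos}\,\Xi(\check{x},\check{y})$ one strictly obtains $\xi^*_k$ times elements of $\mathrm{co}\,\partial^{us}_D\varphi_k(\check{x},\check{y})$; the paper drops the same convex hull at the corresponding point, so this is not held against you).
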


\begin{proof} 
 Since $(\check{x}, \check{y}) \in E$ is a local weak Pareto solution for problem \text{(A)}, it is a local weak Pareto solution of (\text{$A^*$}). Hence, there exists a neighborhood $\mathbb{U}$ of $(\check{x},\check{y})$ such that
 \begin{equation*}
  \frac{\mathcal{F}_k (x, y)}{\mathcal{G}_k (x, y)} - \frac{\mathcal{F}_k (\check{x}, \check{y})}{\mathcal{G}_k (\check{x}, \check{y})}\ \notin - int\ {\mathbb{R}^n_+}, ~~ \forall ~ (x, y) \in E \cap  \mathbb{U}.
 \end{equation*}

 We claim that $(\check{x},\check{y})$  is a local weak Pareto solution of the  problem
  \begin{equation*}
  (\text{P}_{*})\quad 
   \begin{cases}
    \underset{x, y}{\mathop{\min }} \bigg(\mathcal{F}_1 (x, y) - \Phi_1 (\check{x}, \check{y}) \mathcal{G}_1 (x, y), \ldots, \mathcal{F}_n (x, y) - \Phi_n (\check{x}, \check{y}) \mathcal{G}_n (x, y) \bigg) \\
    s. \ t. \quad  (x, y) \in E.
  \end{cases}
 \end{equation*}
 Indeed, suppose for the contrary that there exists a point $(x_0, y_0) \in E \cap V$ with
 \begin{align*}
  \mathcal{F}_k (x_0, y_0) - \Phi_k (\check{x}, \check{y}) \mathcal{G}_k 
  (x_0, y_0) - \mathcal{F}_k (\check{x}, \check{y}) - \Phi_k (\check{x}, 
  \check{y}) \mathcal{G}_k (\check{x}, \check{y}) < 0.
 \end{align*}
 Since $\mathcal{F}_k (\check{x}, \check{y}) - \Phi_k (\check{x}, \check{y})
 \mathcal{G}_k (\check{x}, \check{y}) = 0$ for every $k= 1,\dots,n$, we have
\begin{align*}
 \mathcal{F}_k (x_0, y_0) < \Phi_k (\check{x}, \check{y})
 \mathcal{G}_k (x_0, y_0) < 0
 & \Longleftrightarrow \, \frac{\mathcal{F}_k (x,y)}{\mathcal{G}_k (x,y)} < \frac{\mathcal{F}_k (\check{x}, \check{y} )}{\mathcal{G}_k 
 (\check{x}, \check{y} )}, ~ \forall ~ k= 1,\ldots,n,
\end{align*}
which contradicts that $(\check{x}, \check{y}) \in E$ is a local weak Pareto solution for problem \text{(A)}, thus  $(\check{x}, \check{y})$ is a local weak Pareto solution of $(P_*)$.

Now, since $\Phi (\check{x}, \check{y}) = \frac{\mathcal{F} (\check{x}, 
 \check{y})}{\mathcal{G} (\check{x}, \check{y})}$ and $\varphi (x, y) = 
 \mathcal{F} (x, y) - \Phi (\check{x}, \check{y}) \mathcal{G}(x,y)$, we set 
 $F (x, y) = \varphi (x,y) - \varphi (\check{x}, \check{y})$. Let us 
 consider the problem
 \begin{equation*}
  \begin{cases}
   \underset{x, y}{\mathop{ \min}} \ \mathcal{K} (x, y) = \Delta_{-int 
   \mathbb{R}_+^n} \circ F (x, y) \\
   s. \ t. \quad (x,y) \in E.
  \end{cases}
 \end{equation*}
Then $(\check{x}, \check{y})$ is a local solution. 
Take $d = (d_1, d_2) \in cl\,W_D (E, (\check{x}, \check{y}))$. Then, there exist $d_n \rightarrow d$ as $n \rightarrow \infty$ such that $d_n \in W_D (E, (\check{x}, \check{y}))$, i.e., $d_n \in W_D (E, (\check{x}, \check{y})) \cap D$. Hence, we can find $t_n \downarrow 0$ such that  $(\check{x}, \check{y}) + t_n d_n \in E$. Furthermore, since $(\check{x}, \check{y})$ is local minimum of $\mathcal{K}$ over $E$, we have
 \begin{align*}
  \frac{\mathcal{K}((\check{x}, \check{y}) + t_n d_n) - \mathcal{K} 
  (\check{x}, \check{y})}{t_n}  \geq 0 & \Longrightarrow \, 
  \underset{n}{\limsup} \frac{\mathcal{K}((\check{x}, \check{y}) + t_n d_n) 
  - \mathcal{K} (\check{x},\check{y})}{t_n}  \geq 0 \\
  & \Longrightarrow \, \mathcal{K}^+((\check{x}, \check{y}), d) \geq 0.
 \end{align*}
Using the upper semi-regularity of ${\partial}^{us}_D\mathcal{K} 
(\check{x},\check{y})$ at $(\check{x},\check{y})$, we have
\begin{align*}
  & \hspace{0.9cm} \underset{\zeta \in {\partial}^{us}_D \mathcal{K} 
  (\check{x}, \check{y})}{\sup} \langle \zeta, d \rangle  \geq 0, ~ \forall ~ 
 d \in cl\,W_D (E, (\check{x}, \check{y})) \\
 & \Longrightarrow \, \underset{\zeta \in co{\partial}^{us}_D \mathcal{K} 
 (\check{x}, \check{y})}{\sup} \langle \zeta, d \rangle  \geq 0, ~ \forall ~  d \in cl\,W_D (E, (\check{x}, \check{y})),
\end{align*}
 and since $E$ is locally star-shaped at $(\check{x}, \check{y})$,
\begin{align*}
 T_{D} (E, (\check{x}, \check{y})) = cl\,W_D (E, (\check{x}, \check{y})) \Longrightarrow \, \underset{\zeta \in co{\partial}^{us}_D \mathcal{K} (\check{x},\check{y})}{\sup} \langle \zeta, d \rangle \geq 0, ~ \forall ~  d \in T_{D} (E, (\check{x}, \check{y})).
\end{align*}
		
It follows from ${\partial}_D$-nonsmooth ACQ assumption at 
$(\check{x}, \check{y})$ that
 \begin{align*}
 [\Xi (\check{x}, \check{y})]^{\circ} \subseteq T_{D} (E, (\check{x}, \check{y})) \, \Longleftrightarrow \,
 \underset{\zeta \in co{\partial}^{us}_D \mathcal{K} (\check{x}, 
 \check{y})}{\sup} \langle \zeta, d \rangle  \geq 0, ~ \forall~  d \in \Xi 
 (\check{x},\check{y})^{\circ}.
\end{align*}
Since $\Xi(\check{x},\check{y}) \subseteq pos~\Xi(\check{x},\check{y})$, we 
obtain 
\begin{equation*}
 \underset{\zeta \in co{\partial}^{us}_D \mathcal{K}(\check{x}, 
 \check{y})}{\sup} \langle \zeta, d \rangle  \geq 0, ~ \forall ~  d \in 
[pos~\Xi (\check{x},\check{y})]^{\circ}.
\end{equation*}
In virtue of the closedness of ${\partial}^{us}_D \mathcal{K} (\check{x}, 
\check{y})$, the set $co \, {\partial}^{us}_D \mathcal{K} (\check{x}, \check{y})$ 
 is compact (see \cite[Theorem 1.4.3]{hiriart2004fundamentals}). Hence, it 
 follows from Lemma \ref{lm1} with $A = pos\,\Xi (\check{x},\check{y})$ and 
$B = co\,{\partial}^{us}_D \mathcal{K}(\check{x},\check{y})$ that
$$(0,0) \in co~{\partial}^{us}_D \mathcal{K}(\check{x},\check{y}) + 
cl\,pos\,\Xi (\check{x},\check{y}).$$
		
But $pos \, \Xi (\check{x}, \check{y})$ is closed too, thus
$$(0, 0) \in co\,{\partial}^{us}_D \mathcal{K} (\check{x}, \check{y}) +  
pos\,\mathbb{C} (\check{x}, \check{y}) + pos \, N_D(0_{n_1+n_2}),$$
and by the chain rule property in Lemma \ref{pro1}, there exists $\xi^* \in {\partial}^{*} \Delta_{-int\mathbb({R}_+^n)}(0)$ such that
$$(0, 0) \in co \, \Big(\xi^* \circ \big(\partial^{us}_D \varphi_1 (\check{x}, 
\check{y}),  \dots, \partial^{us}_D \varphi_n (\check{x}, \check{y}) \big) \Big) 
+ pos\,\mathbb{C} (\check{x}, \check{y}) + pos \, N_D (0_{n_1 + n_2}).$$
Since $N_D (0_{n_1 + n_2})$ is a convex cone, $pos \, N_D (0_{n_1+n_2}) 
=  N_D (0_{n_1+n_2})$. Then,	
\begin{align*}
 & (0,0) \in \xi^* \circ \Big((\partial^{us}_D \mathcal{F}_1 (\check{x}, \check{y}) + \Phi_1 (\check{x},\check{y}) \partial^{us}_D (-\mathcal{G}_1 ) (\check{x}, \check{y})), \ldots, (\partial^{us}_D \mathcal{F}_n 
 (\check{x},\check{y}) + \Phi_n (\check{x}, \check{y}) \partial^{us}_D
 (-\mathcal{G}_n) (\check{x}, \check{y})) \Big) \\ 
 & + pos\,\Bigg( \left( \bigcup_{j \in J_0 (\check{x}, \check{y})} {\partial}^{u}_D \mathcal{H}_j (\check{x}, \check{y}) \right) \cup \left( \bigcup_{j \in S_0 (\check{x}, \check{y})} {\partial}^{u}_D \phi_s (\check{x}, \check{y}) \right) \cup \ {\partial}^{u}_D \Psi (\check{x}, \check{y}) \Bigg) + N_D (0_{n_1 + n_2}).
\end{align*}
Then, there exists $\delta^* = (\xi^*, \tau^*, \rho^*, \eta^*) \in \mathbb{R}_+^{(n + p + q + 1)}$, with $\xi^* \neq 0_{\mathbb{R}^n}$, 
such that
\begin{align*}
 & (0, 0) \in \sum_{k=1}^{n}\xi^*_k  \big(\partial^{us}_D \mathcal{F}_k 
 (\check{x}, \check{y}) + \Phi_k (\check{x}, \check{y}) \big( \partial^{us}_D
 (-\mathcal{G}_k) (\check{x}, \check{y}) \big) + \sum_{j=1}^{p} \tau_j^* \
 \big(\partial^{u}_D \mathcal{H}_j (\check{x}, \check{y}) \\
 & \hspace{1.0cm} + \sum_{s=1}^{q} \rho^*_s \ {\partial}^{u}_D \phi_s (\check{x}, \check{y}) + \eta^*\ {\partial}^{u}_D \Psi (\check{x}, \check{y}) + N_D (0_{n_1 + n_2}), \\
 & \tau_j^* \mathcal{H}_j (\check{x}, \check{y}) = 0, ~ \forall ~ j \in J, \\
 & \, \rho^*_s \phi_s (\check{x}, \check{y}) = 0, ~ \forall ~ s \in S.
\end{align*}
Finally, since $\Delta_{(-\text{int} \mathbb{R}^{n}_+)}(.)$  is convex and
$\Delta_{-\text{int} (\mathbb{R}^{n}_+)}(0) = 0$, we deduce that 
$\Delta_{-\text{int} (\mathbb{R}^{n}_+)}(\xi) \geq \langle \xi^*, ~ \xi \rangle$.	
Hence, for all $\xi \in {-\text{int} \mathbb{R}^{n}_+}$, we have		
$$ \langle \xi^*,~ \xi \rangle \leq \Delta_{-\text{int} (\mathbb{R}^{n}_+)}(\xi) = -d(\xi,~ \mathbb{R}^{n} \backslash - \text{int} \mathbb{R}^{n}_+) \leq 0, $$
i.e., $\xi^* \in (-\mathbb{R}^n_+)^\circ$, thus $\xi^* \neq 0$ by Lemma \ref{prp1}. Then $\xi^* \in (-\mathbb{R}^n_+)^\circ \backslash \{0\}$, and the result follows by taking $\tau_j^*=0$ for $j \notin J_0 (\check{x},\check{y})$ and $\rho^*_s =0 $ for $s \notin S_0 (\check{x}, \check{y})$.
\end{proof}

\begin{remark}
Theorem \ref{thm N1} remains applicable even when the data are discontinuous, unlike the results in \cite[Theorem 1]{Babahadda2006}, \cite[Theorem 3.1]{gadhi2012necessary}, and \cite[Theorem 4.2]{lafhim2018necessary}. Furthermore, because the set of continuity directions for a given function is not necessarily convex or closed, Theorem \ref{thm N1} holds in this general context, whereas \cite[Theorem 1]{dempe2015necessary} does not.
\end{remark}

The following example illustrates the necessary optimality conditions from Theorem~\ref{thm N1} for a nondifferentiable bilevel problem.

\begin{example}
 Let us consider the multiobjective bilevel optimization problem:
 \begin{equation*}
  \text{$(Q_{1})$}\quad 
  \begin{cases}
   \underset{\qquad \ x, y}{\mathop{\mathbb{R}^{2}_+ - min }}\ \left( \frac{\mathcal{F}_1 (x, y)}{\mathcal{G}_1 (x, y)},  \frac{\mathcal{F}_2 (x, y)}{ \mathcal{G}_2 (x,y)} \right)  \\
   s.  t. ~ \mathcal{H}_1(x,y)=xy^2-x \leq 0 ,\\
   \qquad y \in \mathfrak{B} (x),
  \end{cases}
\end{equation*}\
where $\mathcal{F}_1 (x, y) = 6x-5y + \frac{5}{2}$, $\mathcal{G}_1 (x, y) =
x - \frac{3}{5}y + \frac{1}{2}$,  
$$
\mathcal{F}_2 (x,y) = \left\{
\begin{array}{ccl}
 3x + \frac{1}{2}y+1 & {\rm if} & x \geq 0, \, y \geq 0,\\
 3 + \vert y\vert & {\rm if} & x\geq 0, \, y < 0, \\
 x^2+1 & {\rm if} & x < 0, 
\end{array}
\right.
$$
and
$$
\mathcal{G}_2 (x,y) = \left\{
\begin{array}{ccl}
 x - \frac{1}{4}y + \frac{1}{2} & {\rm if} & x\geq 0, \\
 y+1 & {\rm if} & x < 0, 
\end{array}
\right.
$$
and for any $x \in \mathbb{R}$, $\mathfrak{B} (x)$ is the set of the optimal 
solutions to the following optimization problem
\begin{equation*}	\text{$((Q_{1})_{x})$}\quad 
		\begin{cases}
			\underset{ y}{\mathop{ min }} \ \ f (x,y)= x^2+x^{\frac{2}{3}}-y^3\\
			s. t.  \quad \phi_1(x,y)=y\leq 0,\\
			~~~~	\quad \phi_2(x,y)=-x^2-y\leq 0.
		\end{cases}
\end{equation*}
Note that $(\check{x},\check{y})=(0,0)$ is a feasible solution for problem \text{$(Q_{1})$} with $E = \mathbb{R}^ + \times \{0\}$, $\mathfrak{B} (x) = \{0\}$ and $\Psi(x,y)= \min \{-y^3,~ 0\}$.
	
Clearly, $\Phi_1(0,0) = 5$, $\Phi_2(0,0) = 2$, $\varphi_1(x,y)= x-2y$ and
$$ 
\varphi_2 (x,y) = \left\{
\begin{array}{ccl}
 x + y & {\rm if} & x\geq 0, \, y \geq 0, \\
 2 + \vert y\vert + \frac{1}{2}y-2x & {\rm if} & x\geq 0, \, y < 0, \\
  x^2 - 2y-1 & {\rm if} & x < 0.
\end{array}
\right.
$$
Since $D_{\varphi_1} = \mathbb{R}\times \mathbb{R}$, $D_{\varphi_2} = \mathbb{R}^{+} \times \mathbb{R}^{+}$, and $D_{\mathcal{H}_1} = D_{\phi_1} = D_{\phi_2} = D_{\Psi} =  \mathbb{R} \times 
\mathbb{R}$, we have
$$D = \mathbb{R}^{+}\times \mathbb{R}^{+}~~ \text{and}~~ 
N_{D}(0,0)= \mathbb{R}^{-} \times \mathbb{R}^{-}.$$
	
On the other hand, observe that ${\partial}^{us}_D \varphi_1(\check{x},\check{y}) 
= \{(1,-2)\}$ and ${\partial}^{us}_D \varphi_2(\check{x},\check{y}) = 
\{(1,1)\}$ are bounded DUSRCF of $\varphi_1$ and $\varphi_2$, respectively, 
while ${\partial}^{u}_D \mathcal{H}_1 (\check{x}, \check{y}) = \{(-1, 0)\}$, 
${\partial}^{u}_D \phi_1 (\check{x},\check{y}) = \{(0,1)\}$, 
${\partial}^{u}_D \phi_2 (\check{x}, \check{y}) = \{(0, - 1)\}$ and
${\partial}^{u}_D\Psi(\check{x},\check{y})= \{(0,0)\}$ are DUCF of 
$\mathcal{H}_1$, $\phi_1$, $\phi_2$ and $\Psi$, respectively.
		
Since, $\mathbb{C} (\check{x}, \check{y}) = \{(-1, 0), (0,1), (0,-1), 
(0,0)\}$, we have
\begin{align*}
 \Xi (\check{x}, \check{y}) = \{(-1,0), (0,1), (0,-1), (0,0)\} \cup ( \mathbb{R}^{-} \times \mathbb{R}^{-}) \Longrightarrow \, \Xi (\check{x},\check{y})^{\circ} = \mathbb{R}^{+} \times \{0\}.
\end{align*}
As $T_{D} (E, (\check{x}, \check{y})) = \mathbb{R}^{+} \times \{0\}$, we infer that $[\Xi (\check{x}, \check{y})]^{\circ} \subseteq T_{D} (E, (\check{x}, \check{y}))$. Hence, ${\partial}^{u}_D$-nonsmooth ACQ holds at $(\check{x}, \check{y})$, and since 
$pos~\Xi (\check{x},\check{y})= \mathbb{R}^{-} \times \mathbb{R}$, the set
$pos~\Xi (\check{x},\check{y})$ is closed.

Finally, taking $\xi^*_1 = \frac{1}{2}$, $\xi^*_2 = \frac{3}{2}$, $\tau^*_1 = \frac{1}{4}$,  $\rho^*_1 = \frac{3}{4}$, $\rho^*_2 = \frac{1}{4}$, 
$\eta^* = \frac{2}{3}$ and $(-\frac{7}{4}, -1) \in N_{D}(0,0)$,
we obtain
\begin{multline*}
 (0, 0) \in 
 \xi^*_1 ~  {\partial}^{us}_D \varphi_1(\check{x},\check{y}) + 
 \xi^*_2~  {\partial}^{us}_D \varphi_2(\check{x},\check{y}) + \tau_1^*  
 \partial^{u}_D \mathcal{H}_1 (\check{x},\check{y}) + \rho^*_1 \ 
 {\partial}^{u}_D \phi_1 (\check{x},\check{y})\\
 +\rho^*_2 \ {\partial}^{u}_D \phi_2 (\check{x},\check{y}) + 
 \eta^*\ {\partial}^{u}_D \Psi (\check{x},\check{y})+~N_D(0,0).
\end{multline*}
	Therefore, $(0,0)$ satisfies the necessary optimality conditions of Theorem \ref{thm N1} for local weak Pareto solution of \text{$(Q_{1})$}.
\end{example}

\section{Sufficient optimality conditions}

In order to provide sufficient optimality conditions, we recall the following 
generalized convexity notions for nonsmooth functions (see \cite{el2022applying}).

\begin{definition}{\rm (\cite{el2022applying})} \label{def:genpseudoconvex}
 Let $\mathcal{F}: \mathbb{R}^{n_1} \times \mathbb{R}^{n_2} \rightarrow
 \mathbb{R}$ and $(\check{a}, \check{b}) \in \mathbb{R}^{n_1} \times
 \mathbb{R}^{n_2}$. Suppose that $\mathcal{F}$ possesses a directional 
 upper (semi-regular) convexificator $\partial^{u}_D \mathcal{F} 
 (\check{a}, \check{b})$. Then $\mathcal{F}$ is said to be:
 \begin{itemize}[label=$\diamond.$]
  \item[$(i)$] $\partial^{u}_D$-convex at $(\check{a},\check{b})$ if for all 
  $(a,b) \in \mathbb{R}^{n_1} \times \mathbb{R}^{n_2},$ where 
  $(a,b)-(\check{a},\check{b}) \in D$, we have
  \begin{equation*}
   \mathcal{F} (a, b) - \mathcal{F} (\check{a}, \check{b}) \geq \langle x^*, ~ (a,b) - (\check{a}, \check{b}) \rangle, ~ \forall ~ x^* \in \partial^{u}_D
   \mathcal{F}~(\check{a},\check{b}).
  \end{equation*}
		
  \item[$(ii)$] $\partial^{u}_D$- quasiconvex at $(\check{a},\check{b})$ if for all $(a,b) \in \mathbb{R}^{n_1} \times \mathbb{R}^{n_2}$, where $(a,b) -
  (\check{a},\check{b}) \in D$, we have
\begin{equation*}
 \mathcal{F} (a, b) - \mathcal{F} (\check{a}, \check{b}) \leq 0 \Longrightarrow \langle x^*, (a,b) - (\check{a}, \check{b}) \rangle \leq 0,  ~ \forall ~ x^* \in \partial^{u}_D \mathcal{F}~ (\check{a}, \check{b}).
 \end{equation*}

 \item[$(iii)$] $\partial^{u}_D$- pseudoconvex at $(\check{a}, \check{b})$ if 
  for all $(a,b) \in \mathbb{R}^{n_1} \times \mathbb{R}^{n_2}$, where $(a,b) 
 - (\check{a}, \check{b}) \in D \backslash \{0_{n_1+n_2}\}$, we have
 \begin{equation*}
  \mathcal{F} (a, b) - \mathcal{F} (\check{a}, \check{b}) < 0  \Longrightarrow \langle x^*,  (a,b) - (\check{a}, \check{b}) \rangle < 0, ~ \forall ~ x^* \in \partial^{u}_D \mathcal{F}~ (\check{a}, \check{b}).
 \end{equation*}
 \end{itemize}
\end{definition} 

For differentiable functions, pseudoconvexity was introduced in \cite{Manga1} for providing sufficient first-order optimality conditions for mathematical programming problems. In particular, the classical example of a differentiable fractional function in which its numerator is convex and its denominator is concave is pseudoconvex (see \cite[Theorem 3.2.10]{CM-Book}), i.e., Definition
\ref{def:genpseudoconvex} is closely related to our fractional problem.

In order to continue, we introduce the following sets:
\begin{align*}
 & \, J_0^{\neq} (\check{x}, \check{y}) \ = \{j \in J: \mathcal{H}_j (\check{x}, \check{y}) \neq 0\} \\
 & S_0^{\neq} (\check{x}, \check{y}) \ = \{s \in S: 
\phi_s (\check{x}, \check{y}) \neq 0\}.
\end{align*}

Our second main result, which provide sufficient optimality conditions for problem
\text{(A)}, is given below.

\begin{theorem}\label{thm S1}
 Let $(\check{x}, \check{y}) \in E$ be a feasible solution of \text{(A)}. Suppose that the functions $\varphi_k$ is $\partial^{u}_D$-pseudoconvex at $(\check{x}, \check{y})$ for every $k = 1,\dots,n$, the functions 
 $\mathcal{H}_j$, $j \in J_0 (\check{x}, \check{y})$, $\phi_s, s \in S_0 (\check{x}, \check{y})$ and $\Psi$ are $\partial^{u}_D$-quasiconvex at $(\check{x}, \check{y})$ and that $(x,y) - (\check{x}, \check{y}) \in D$. If there 
 exists $\delta^* = (\xi^*, \tau^*, \rho^*, \eta^*) \in \mathbb{R}_+^{
 (n+p+q+1)}$, with $\xi^* \neq 0_{\mathbb{R}^n}$, satisfying (\ref{eq3}), 
 then $(\check{x}, \check{y})$ is a weak Pareto solution of $($\text{A}$)$.
\end{theorem}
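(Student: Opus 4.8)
The plan is to argue by contradiction: suppose $(\check{x},\check{y})$ is not a weak Pareto solution of $(A)$. As established in the proof of Theorem~\ref{thm N1}, this is equivalent to the existence of a feasible point $(x,y) \in E$ with $\varphi_k(x,y) < \varphi_k(\check{x},\check{y}) = 0$ for every $k = 1,\dots,n$; moreover the hypothesis guarantees $(x,y) - (\check{x},\check{y}) \in D$. First I would exploit the $\partial^u_D$-pseudoconvexity of each $\varphi_k$: since $\varphi_k(x,y) - \varphi_k(\check{x},\check{y}) < 0$ and $(x,y) - (\check{x},\check{y}) \in D\setminus\{0_{n_1+n_2}\}$, we get $\langle x_k^*, (x,y)-(\check{x},\check{y})\rangle < 0$ for all $x_k^* \in \partial^{us}_D \varphi_k(\check{x},\check{y})$. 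Since $\xi^* \in \mathbb{R}^n_+$ with $\xi^* \neq 0_{\mathbb{R}^n}$, taking the $\xi^*$-weighted combination yields $\sum_{k=1}^n \xi_k^* \langle x_k^*, (x,y)-(\check{x},\check{y})\rangle < 0$.

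Next I would handle the constraint terms. For $j \in J_0(\check{x},\check{y})$ we have $\mathcal{H}_j(x,y) \leq 0 = \mathcal{H}_j(\check{x},\check{y})$, so $\partial^u_D$-quasiconvexity gives $\langle z_j^*, (x,y)-(\check{x},\check{y})\rangle \leq 0$ for all $z_j^* \in \partial^u_D \mathcal{H}_j(\check{x},\check{y})$; similarly for $\phi_s$ with $s \in S_0(\check{x},\check{y})$ and for $\Psi$ (noting $\Psi(x,y)\le 0 = \Psi(\check{x},\check{y})$ since $(x,y)\in E$ and the weak Pareto comparison point lies in $E$ where $\Psi$ vanishes at $(\check{x},\check{y})$). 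Because the complementarity conditions in (\ref{eq3}) force $\tau_j^* = 0$ for $j \in J_0^{\neq}(\check{x},\check{y})$ and $\rho_s^* = 0$ for $s \in S_0^{\neq}(\check{x},\check{y})$, only the active indices contribute, so each of $\sum_j \tau_j^* \langle z_j^*, \cdot\rangle$, $\sum_s \rho_s^* \langle w_s^*, \cdot\rangle$, and $\eta^* \langle v^*, \cdot\rangle$ is $\le 0$ when paired with $(x,y)-(\check{x},\check{y})$.

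Finally I would handle the normal-cone term: by definition of $N_D(0_{n_1+n_2}) = T(D,0_{n_1+n_2})^\circ$ and the fact that $D$ is a cone containing the direction $(x,y)-(\check{x},\check{y})$ (hence this direction lies in $T(D,0_{n_1+n_2})$), any $\nu \in N_D(0_{n_1+n_2})$ satisfies $\langle \nu, (x,y)-(\check{x},\check{y})\rangle \le 0$. Now take the membership (\ref{eq3}): there exist $x_k^* \in \partial^{us}_D\varphi_k(\check{x},\check{y})$, $z_j^* \in \partial^u_D\mathcal{H}_j(\check{x},\check{y})$, $w_s^* \in \partial^u_D\phi_s(\check{x},\check{y})$, $v^* \in \partial^u_D\Psi(\check{x},\check{y})$ and $\nu \in N_D(0_{n_1+n_2})$ whose $\delta^*$-weighted sum equals $(0,0)$. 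Pairing that zero vector with $(x,y)-(\check{x},\check{y})$ gives $0 = \sum_k \xi_k^*\langle x_k^*,\cdot\rangle + \sum_j \tau_j^*\langle z_j^*,\cdot\rangle + \sum_s \rho_s^*\langle w_s^*,\cdot\rangle + \eta^*\langle v^*,\cdot\rangle + \langle\nu,\cdot\rangle$, but the first sum is strictly negative and every remaining term is $\le 0$, so the right-hand side is strictly negative — a contradiction. Hence no such $(x,y)$ exists and $(\check{x},\check{y})$ is a weak Pareto solution of $(A)$. The main obstacle I anticipate is the bookkeeping around which multipliers the complementarity conditions kill and checking that $\Psi$ indeed satisfies $\Psi(x,y) \le \Psi(\check{x},\check{y}) = 0$ so that its quasiconvexity term has the right sign; the directional restriction $(x,y)-(\check{x},\check{y}) \in D$ (hypothesized) is exactly what makes the convexity inequalities applicable, so no subtlety there beyond invoking it.
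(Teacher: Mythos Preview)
Your proposal is correct and follows essentially the same contradiction argument as the paper: assume a dominating feasible $(x,y)$, use $\partial^u_D$-pseudoconvexity of the $\varphi_k$ to make the objective block strictly negative in the direction $(x,y)-(\check{x},\check{y})$, use $\partial^u_D$-quasiconvexity and complementarity to make each constraint block nonpositive, and use the normal-cone definition for the $N_D$ term, contradicting the zero sum from \eqref{eq3}. The only cosmetic difference is that the paper first isolates the constraint block and shows it must be strictly positive (its inequality \eqref{eq4}) before deriving the contradiction, whereas you sum everything at once; the content is identical.
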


\begin{proof}
 Suppose for the contrary that $(\check{x}, \check{y})$ is not a weak 
 Pareto solution of ($\text{A}$). Hence $(\check{x}, \check{y})$ is not a 
 weak efficient solution of (\text{$A^*$}). Then there exist $(x, y) \in 
 E$  such that
 \begin{align*}
 \frac{\mathcal{F}_k(x,y) }{\mathcal{G}_k (x,y)} < \frac{\mathcal{F}_k (\check{x}, \check{y}) }{\mathcal{G}_k 
  (\check{x}, \check{y})} < 0  \Longleftrightarrow \, \mathcal{F}_k(x,y) - \Phi_k (\check{x}, \check{y}) \mathcal{G}_k (x,y) < 0, \, \forall ~ k= 1,\ldots,n. 
 \end{align*} 
 It follows from $\mathcal{F}_k (\check{x}, \check{y}) - \Phi_k (\check{x}, \check{y}) \mathcal{G}_k (\check{x}, \check{y}) = 0$ with $k= 1,\ldots,n$,
 that
 \begin{align*}
  & \mathcal{F}_k (x, y) - \Phi_k (\check{x}, \check{y}) \mathcal{G}_k (x, y) < \mathcal{F}_k (\check{x}, \check{y}) - \Phi_k (\check{x}, \check{y}) 
  \mathcal{G}_k (\check{x}, \check{y}) \\
  & \hspace{1.0cm} \Longleftrightarrow \, \varphi_k(x,y) <  \varphi_k 
  (\check{x},\check{y}),  ~ \forall ~ k= 1,\ldots,n.
 \end{align*}
Since for each $k=1,\ldots,n,~ \varphi_k$ is $\partial^{u}_D$-pseudoconvex at $(\check{x},\check{y})$ and $(x,y)-(\check{x},\check{y}) \in D$, we get 
$\langle \vartheta_k^*, (x, y) - (\check{x}, \check{y}) \rangle < 0$ for all $\vartheta_k^* \in \partial^{us}_D \varphi_k (\check{x}, \check{y})$.

By using \eqref{eq3}, there exist $\vartheta_k^{*'} \in \partial^{us}_D  \mathcal{F}_k(\check{x}, \check{y})$, $\vartheta_k^{*''} \in \partial^{us}_D 
(-\mathcal{G}_k) (\check{x}, \check{y})$, $\mu_j \in \partial^{u}_D \mathcal{H}_j (\check{x}, \check{y})$ for all $j \in J$, $\theta_s \in \partial^{u}_D \phi_s (\check{x}, \check{y})$ for all $s \in S$, $w^{*} \in \partial^{u}_D \Psi (\check{x}, \check{y})$ and $z^* \in N_D(0_{n_1+n_2})$ such that 
\begin{align}
 (0, 0) = & ~ \sum_{k=1}^{n} \xi^*_k  \big(\vartheta_k^{*'} + \Phi_k (\check{x},
 \check{y}) \vartheta_k^{*''} \big) + \sum_{j=1}^{p} \tau_j^* ~ \mu_j +
  \sum_{s=1}^{q} \rho^*_s \ \theta_s + \eta^*~ w^{*} + z^*,\label{eqn7} \\ 
  & \tau_j^* \mathcal{H}_j (\check{x}, \check{y}) =0, \ \rho^*_s \phi_s
  (\check{x},\check{y}) = 0, ~ \forall ~ j \in J, ~ \forall ~ s \in S. \notag
\end{align}

Since $\partial^{us}_D \varphi_k (\check{x}, \check{y}) = \partial^{us}_D 
F_k (\check{x}, \check{y}) + \Phi_k (\check{x}, \check{y})\ \partial^{us}_D 
(-G_k) (\check{x}, \check{y})$, we have $\vartheta_k^* = \big(\vartheta_k^{*'} 
+ \Phi_k (\check{x}, \check{y}) \vartheta_k^{*''} \big)$. Thus, 
$$\langle (\vartheta_k^{*'} + \Phi_k (\check{x}, \check{y}) \vartheta_k^{*''}), 
 (x, y) - (\check{x}, \check{y}) \rangle < 0, ~ \forall ~ k= 1,\ldots,n.$$
As $\xi^* = (\xi^*_1,\dots,\xi^*_n) \in (-\mathbb{R}^n_+)^\circ \backslash \{0\}$, we obtain
\begin{align}
 & \hspace{1.1cm} \Bigg\langle \sum_{k=1}^{n}\xi^*_k  \big( \vartheta_k^{*'} + \Phi_k (\check{x}, \check{y}) ~ \vartheta_k^{*''} \big),\ (x,y) - (\check{x}, \check{y}) \Bigg\rangle < 0, \notag \\
 & \overset{\eqref{eqn7}}{\Longrightarrow} \, \Bigg\langle \sum_{j=1}^{p} \tau_j^* ~ \mu_j + \sum_{s=1}^{q} \rho^*_s \ \theta_s + \eta^* ~ w^{*} + z^*, ~ (x,y) - (\check{x}, \check{y}) \Bigg\rangle > 0. \label{eq4}
\end{align}

Since $(x, y) \in E$, we have
 \begin{equation*} 
  \begin{cases}
   \mathcal{H}_j (x, y) \leq 0, ~ \forall ~ j \in J_0 (\check{x}, \check{y}), \\ 
   \phi_s (x, y) \leq 0, ~ \forall ~ s \in S_0 (\check{x}, 
   \check{y}), \\
   \Psi (x,y) \leq 0.
  \end{cases}
 \end{equation*}
 If $j \in J_0^{\neq} (\check{x}, \check{y})$, then $\tau_j^* = 0$, 
 while if $s \in S_0^{\neq} (\check{x}, \check{y})$, then $\rho^*_s = 0$. Hence,
 \begin{align}
  & \, \langle \tau_j^* \mu_j, (x, y) - (\check{x}, \check{y}) \rangle = 0, ~ \forall ~ j \in J_0^{\neq} (\check{x}, \check{y}), \label{eq5} \\
 & \langle \rho^*_s \theta_s,~ (x,y)- (\check{x}, \check{y}) \rangle = 0, ~ \forall ~ s \in S_0^{\neq}(\check{x}, \check{y}). \label{eq6}
\end{align}

 Note that $\Psi (\check{x}, \check{y}) = 0$, $\mathcal{H}_j (\check{x}, 
 \check{y}) = 0$ for all $j \in J_0 (\check{x}, \check{y})$ and $\phi_s (\check{x}, \check{y}) = 0$ for all $s \in S_0 (\check{x}, \check{y})$, thus
 \begin{equation*} 
  \begin{cases}
   \mathcal{H}_j (x, y) - \mathcal{H}_j (\check{x}, \check{y}) \leq 0, ~ \forall ~ j \in J_0 (\check{x}, \check{y}), \\ 
   ~~\phi_s (x, y) - \phi_s (\check{x}, \check{y}) \leq 0, ~ \forall ~ s \in S_0 (\check{x}, \check{y}), \\
   ~~~\Psi (x,y) -  \Psi (\check{x}, \check{y}) \leq 0.
  \end{cases}
 \end{equation*}

 Furthermore, by assumption, $\mathcal{H}_j$ for all $j \in J_0 (\check{x}, \check{y})$, $\phi_s$ for all $s \in S_0 (\check{x}, \check{y})$ and $\Psi$ are all $\partial^{u}_D$-qua\-si\-con\-vex at $(\check{x}, \check{y})$ and since $z^* \in N_D (0_{n_1 + n_2})$, we obtain
\begin{equation*} 
 \begin{cases}
  \langle \mu_j, (x, y) - (\check{x}, \check{y}) \rangle \leq 0, ~ \forall ~ 
  j \in J_0 (\check{x}, \check{y}), \\
  \langle \theta_s, (x, y) - (\check{x}, \check{y}) \rangle \leq 0, ~ \forall ~ 
  s \in S_0 (\check{x}, \check{y}), \\ 
  \, \langle w^{*}, (x, y) - (\check{x}, \check{y}) \rangle \leq 0, \\
  \, \langle z^*, (x, y) - (\check{x}, \check{y}) \rangle \leq 0. 
 \end{cases}
\end{equation*}
It follows from $\tau^*_j \geq 0$  for all $j \in J_0 (\check{x}, \check{y})$, $\rho^*_s \geq 0$  for all $s \in S_0 (\check{x}, \check{y})$ and $\eta^*\geq 0$ that
 \begin{equation} \label{eq7}
  \begin{cases}
   \langle \tau^*_j \mu_j, (x, y) - (\check{x}, \check{y}) \rangle  \leq 0, 
   ~ \forall ~ j \in J_0 (\check{x}, \check{y}), \\ 
   \, \langle \rho^*_s \theta_s, (x, y) - (\check{x}, \check{y}) \rangle \leq 0, 
   ~ \forall ~ s \in S_0 (\check{x}, \check{y}), \\ 
   ~ \langle \eta^* w^{*}, (x, y) - (\check{x}, \check{y}) \rangle \leq 0, \\ 
   ~~~~ \langle z^*, (x, y) - (\check{x}, \check{y}) \rangle \leq 0. 
   \end{cases}
  \end{equation}

Using equations \eqref{eq5}, \eqref{eq6} and \eqref{eq7}, and since $J = J_0 (\check{x}, \check{y}) \cup J_0^{\neq} (\check{x}, \check{y})$ and $S = S_0 (\check{x}, \check{y}) \cup S_0^{\neq} (\check{x}, \check{y})$, we obtain
\begin{equation*} 
 \begin{cases}
  \langle \tau^*_j \mu_j, (x,y)- (\check{x}, \check{y}) \rangle \leq 0, 
   ~ \forall ~ j \in J (\check{x}, \check{y}), \\ 
  \, \langle \rho^*_s \theta_s, (x, y) - (\check{x}, \check{y}) \rangle \leq 0, 
   ~ \forall ~ s \in S (\check{x}, \check{y}), \\
  \langle \eta^* w^{*}, (x,y)- (\check{x}, \check{y}) \rangle \leq 0, \\ 
  ~~~~ \langle z^*, (x, y) - (\check{x}, \check{y}) \rangle \leq 0. 
 \end{cases}
\end{equation*}
By adding the previous inequalities, we obtain a contradiction to \eqref{eq4}, 
and the result follows.
\end{proof}

\begin{remark}
Note that even when the data is non-continuous, Theorem \ref{thm S1} can be applied while \cite[Theorem 3.9]{gadhi2019sufficient} can not.
\end{remark}

In the example, we illustrate the sufficient optimality conditions established in Theorem \ref{thm S1}.

\begin{example}\label{ex2}
 Consider the multiobjective bilevel optimization problem:
 \begin{equation*}
  \text{$(Q_{1})$}\quad 
   \begin{cases}
    \underset{\qquad \ x, y}{\mathop{\mathbb{R}^{2}_+ - min }}\
    \Big[\frac{\mathcal{F}_1(x,y)}{\mathcal{G}_1(x,y)}, 
     ~\frac{\mathcal{F}_2(x,y)}{\mathcal{G}_2(x,y)}\Big]  \\
	s.  t. ~ \mathcal{H}_1(x,y) \leq 0 ,\\
   ~~~~~ \mathcal{H}_2(x,y) \leq 0, \\
	\qquad y \in \mathfrak{B} (x),
   \end{cases}
\end{equation*}
with $\mathcal{H}_1 (x, y) = - x - y$, $\mathcal{H}_2(x,y)= y$, 
$\mathcal{F}_2 (x, y) = x + 2y + \frac{3}{2}$, $\mathcal{G}_2 (x, y) = 
x + \frac{1}{3}y + \frac{1}{2}$,
$$ 
\mathcal{F}_1(x,y) = \left\{
\begin{array}{ccl}
 x^2+2x+3 & {\rm if} & x \geq 0, \, y \geq 0, \\
 3x^2+x+4y^2-2y+4  & {\rm if} & x \geq 0, \, y < 0, \\
 y^2+2 & {\rm if} & x < 0.
\end{array}
\right.
$$
$$ 
\mathcal{G}_1(x,y) = \left\{
\begin{array}{ccl}
 x-2y+3 & {\rm if} & x \geq 0, \\
 1-x & {\rm if} & x < 0.
\end{array}
\right.
$$
and $\mathfrak{B} (x)$, $x \in \mathbb{R}$, is the set of the optimal 
solutions of the problem \eqref{problem:ax}, with $\lvert S \rvert = 1,\, f (x, y) = {\sqrt[3]{x}} + \sqrt{x} + y^2 - y^3$ and $\phi_1 (x, y) = y^2-y$.

Now, note that $(\check{x}, \check{y}) = (0, 0)$ is a feasible solution for 
problem \text{$(Q_{1})$} with $E = \mathbb{R}^+ \times \{0\}$, $\mathfrak{B} (x) =\{0, 1\}$, $\Psi (x,y) = \min\{y^2 - y^3, 0\}$, $\Phi_1(0,0) = 1$, $\Phi_2(0,0) = 3$, $\varphi_2(x,y)= -2x+y$ and 
$$ 
\varphi_1 (x,y) = \left\{
\begin{array}{ccl}
 x^2 + x + 2y & {\rm if} & x \geq 0, \, y \geq 0, \\
 3x^2+4y^2+1  & {\rm if} & x \geq 0, \, y < 0, \\
 y^2+1+x & {\rm if} & x < 0.
\end{array}
\right.
$$	
Since $D_{\varphi_1} = \mathbb{R}^{+}\times \mathbb{R}^{+}$, 
$D_{\varphi_2} = \mathbb{R} \times \mathbb{R}$, $D_{\mathcal{H}_1} 
= D_{\mathcal{H}_2}= D_{\phi} = D_{\Psi} = \mathbb{R} \times
\mathbb{R}$, we have  $D = D (\check{x}, \check{y}) 
= \mathbb{R}^{+} \times \mathbb{R}^{+}$ and $N_{D} (0, 0) = 
\mathbb{R}^{-} \times \mathbb{R}^{-}$.
	
On the other hand, ${\partial}^{us}_D \varphi_1 (\check{x}, \check{y}) 
 = \{(1,2)\}$ and ${\partial}^{us}_D \varphi_2 (\check{x}, \check{y}) = 
\{(-2,1)\}$ are bounded DUSRCF of $\varphi_1$ and $\varphi_2$, respectively.
Furthermore, ${\partial}^{u}_D \mathcal{H}_1 (\check{x}, \check{y}) =
\{(-1,-1)\}$, ${\partial}^{u}_D \mathcal{H}_2 (\check{x}, \check{y}) = 
\{(0,1)\}$, ${\partial}^{u}_D \phi_1 (\check{x}, \check{y}) = 
\{(0,-1)\}$ and ${\partial}^{u}_D \Psi (\check{x}, \check{y}) = \{(0,0)\}$ 
are DUCF of $\mathcal{H}_1$, $\mathcal{H}_2$, $\phi$ and $\Psi$,
respectively.

We can check that $\varphi_1,\, \varphi_2$ are $\partial^{u}_D$-pseudoconvex at 
$(\check{x}, \check{y})$ and $\mathcal{H}_1$, $\mathcal{H}_2$, $\phi$ and 
$\Psi$ are $\partial^{u}_D$-quasiconvex at $(\check{x},\check{y})$. For instance, for all $(x, y) \in E$,

\begin{itemize}
    \item $ \big\langle {\partial}^{us}_D \varphi_1 (\check{x}, \check{y}),\, (x, y)-  (\check{x},\check{y})\big\rangle = x+2y \geq 0,$
$$ \Rightarrow \varphi_1 (x,y) = \left(\left\{
\begin{array}{ccl}
 x^2 + x + 2y & {\rm if} & x \geq 0, \, y \geq 0, \\
 3x^2+4y^2+1  & {\rm if} & x \geq 0, \, y < 0, \\
 y^2+1+x & {\rm if} & x < 0,
\end{array}
\right. \right) \geq \varphi_1 (\check{x}, \check{y}). $$

\item $\big\langle {\partial}^{us}_D \varphi_2 (\check{x}, \check{y}),\, (x, y)-  (\check{x},\check{y})\big\rangle = -2x+y \geq 0,$
$$ \Rightarrow \varphi_2 (x,y) = \left(-2x+y \right) \geq \varphi_1 (\check{x}, \check{y}). $$

\item $\mathcal{H}_1(x,y) \leq  \mathcal{H}_1 (\check{x}, \check{y}) \Rightarrow 
- x - y \leq 0,$
$$\Rightarrow \big\langle {\partial}^{u}_D \mathcal{H}_1 (\check{x}, \check{y}),\, (x, y)-  (\check{x},\check{y})\big\rangle = -x-y \leq 0.$$

\item $\mathcal{H}_2(x,y) \leq  \mathcal{H}_2 (\check{x}, \check{y}) \Rightarrow 
 y \leq 0,$
$$\Rightarrow \big\langle {\partial}^{u}_D \mathcal{H}_2 (\check{x}, \check{y}),\, (x, y)-  (\check{x},\check{y})\big\rangle = y \leq 0.$$

\item $\phi(x,y) \leq \phi (\check{x}, \check{y}) \Rightarrow 
 y^2-y \leq 0,$
$$\Rightarrow \big\langle {\partial}^{u}_D \phi (\check{x}, \check{y}),\, (x, y)-  (\check{x},\check{y})\big\rangle = -y \leq 0.$$

\item $\Psi(x,y) \leq \Psi (\check{x}, \check{y}) \Rightarrow 
 \min\{y^2 - y^3, 0\} \leq 0,$
$$\Rightarrow \big\langle {\partial}^{u}_D \phi (\check{x}, \check{y}),\, (x, y)-  (\check{x},\check{y})\big\rangle \leq 0.$$
\end{itemize}

Take $\delta^* = \big(\frac{3}{2},\frac{1}{4}, \frac{2}{5}, 1, 
\frac{7}{2}, \frac{1}{3} \big)$. Since $(-\frac{3}{5}, -\frac{7}{20}) \in N_{D} (0, 0)$, 
we have 
\begin{align*}
 (0, 0) \in \xi^*_1 {\partial}^{us}_D \varphi_1 (\check{x}, \check{y}) ~ & + 
 \xi^*_2  {\partial}^{us}_D \varphi_2(\check{x}, \check{y}) +  \tau_1^*
 \partial^{u}_D \mathcal{H}_1 (\check{x}, \check{y}) + \tau_2^* 
 \partial^{u}_D \mathcal{H}_2 (\check{x}, \check{y}) \\
 & + \rho^* \ {\partial}^{u}_D \phi_1 (\check{x}, \check{y}) + 
 \eta^*\ {\partial}^{u}_D \Psi (\check{x}, \check{y}) + N_D(0,0).
\end{align*}
Therefore, $(\check{x}, \check{y})$ is a weakly efficient solution of
\text{$(Q_{1})$} by Theorem \ref{thm S1}.
\end{example}

\section{Mond-Weir Dual Problem}\label{sec:05}

In this section, we develop both weak and strong Mond-Weir type duality results. To that end, let  us consider $(v, w) \in \mathbb{R}^{n_1} \times
\mathbb{R}^{n_2}$ and suppose that $\mathcal{F}_k$ and $(-\mathcal{G}_k)$ 
admit boun\-ded $($DUSRCF$)$ ${\partial}^{us}_D \mathcal{F}_k (v,w)$ 
and ${\partial}^{us}_D (-\mathcal{G}_k) (v, w)$ at $(v, w)$ for every 
$k = 1,\ldots,n$, respectively, and $\mathcal{H}_j$ (with $j \in J = \{1,\ldots,p\}$), $\phi_s$ (with $s \in S = \{1,\ldots,q\}$) and $\Psi$ admit (DUCF) ${\partial}^{u}_D \mathcal{H}_j (\cdot, \cdot)$, ${\partial}^{u}_D \phi_s (x, y)$ and ${\partial}^{u}_D \Psi (x, y)$ at $(v, w)$, respectively. 

Then the associated Mond-Weir dual problem for problem \text{(A)} (with solution set denoted by ${\Omega}$) is given by:
\begin{equation*}
	(\text{DA})\ 
\begin{cases}
 {\mathop{\mathbb{R}^{n}_+ - max }} \ \Phi (v, w) = \frac{\mathcal{F} (v, w)
 }{\mathcal{G} (v, w)} = \left(\frac{\mathcal{F}_1 (v, w)}{\mathcal{G}_1 
 (v, w)}, \dots, \frac{\mathcal{F}_n (v, w)}{\mathcal{G}_n (v, w)} \right) \\
 s.~t. \ \\
 (0, 0) \in \sum_{k=1}^{n}\xi^*_k \Big({\partial}^{us}_D \mathcal{F}_k
 (v, w) + \Phi_k (v, w){\partial}^{us}_D (-\mathcal{G}_k) (v, w) \Big) + N_D 
 (0_{n_1 + n_2}) \\
 \vspace{0.1cm}
 \hspace{1.0cm} + \sum_{j=1}^{p} \tau_j^* \ {\partial}^{u}_D \mathcal{H}_j 
 (v, w) + \sum_{s=1}^{q} \rho^*_s \ {\partial}^{u}_D \phi_s (v, w) 
 + \eta^*\ {\partial}^{u}_D \Psi (v, w) \\		
 \vspace{0.1cm}
 ~ \tau^*_j \mathcal{H}_j (v,w) \geq 0, ~ \forall ~ j \in J, \\  
 ~~ \rho^*_s \phi_s (v,w) \geq 0 ,~ \forall ~ s \in S, \\ 
 \vspace{0.1cm}
 ~~~ \eta^* \Psi (v, w) \geq 0, \\
 \vspace{0.1cm}
 ~ \Upsilon^* = (\xi^*_{1}, \dots, \xi^*_n, \tau^*_1, \dots, \tau^*_p, 
 \rho^*_1, \dots, \rho^*_q, \eta^*) \geq 0, \\
 ~ (\xi^*_1, \dots, \xi^*_n) \neq 
 (0, \dots, 0).
 \end{cases}
\end{equation*}

Sufficient conditions for weak duality are given in the next result.

\begin{theorem} {\rm \textbf{(Weak Duality):}} \label{thm D1}
 Let $(\bar{x}, \bar{y})$ and $(v, w, \Upsilon^*)$  be feasible points of problems \text{(A)} and \text{(DA)}, respectively. Suppose that $\big(\mathcal{F}_k + \Phi_k (v, w) (-\mathcal{G}_k)\big)$ is $\partial^{u}_D$-pseudoconvex at $(v, w)$  for every $k = 1,\dots,n$, $\mathcal{H}_j$ (with $j \in J_0 (\bar{x}, \bar{y})$), $\phi_s$ (with $s \in S_0 (\bar{x}, \bar{y})$) and $\Psi$ are $\partial^{u}_D$-quasiconvex at $(v,w)$, respectively, and that
 $(\bar{x}, \bar{y}) - (v,w) \in D$. Then 
 \begin{equation}
  \Phi(\bar{x}, \bar{y}) \nless \Phi(v,w).
 \end{equation}
\end{theorem}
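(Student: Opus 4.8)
The plan is to argue by contradiction, mirroring the scheme used in the proof of Theorem \ref{thm S1}. Suppose that $\Phi(\bar x, \bar y) < \Phi(v,w)$, i.e. $\frac{\mathcal{F}_k(\bar x, \bar y)}{\mathcal{G}_k(\bar x, \bar y)} < \Phi_k(v,w)$ for every $k = 1,\dots,n$. Since $(\bar x, \bar y)$ is feasible for problem (A) it lies in $\mathcal{M}$, so $\mathcal{G}_k(\bar x, \bar y) > 0$; multiplying through by $\mathcal{G}_k(\bar x, \bar y)$ and using $\mathcal{F}_k(v,w) - \Phi_k(v,w)\mathcal{G}_k(v,w) = 0$ (the definition of $\Phi_k(v,w)$) gives
\[
 \big(\mathcal{F}_k + \Phi_k(v,w)(-\mathcal{G}_k)\big)(\bar x, \bar y) < 0 = \big(\mathcal{F}_k + \Phi_k(v,w)(-\mathcal{G}_k)\big)(v,w), \qquad k = 1,\dots,n .
\]
In particular $(\bar x, \bar y) \neq (v,w)$, so $(\bar x, \bar y) - (v,w) \in D \setminus \{0_{n_1+n_2}\}$.

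Next I would invoke the $\partial^{u}_D$-pseudoconvexity of $\mathcal{F}_k + \Phi_k(v,w)(-\mathcal{G}_k)$ at $(v,w)$ along the admissible direction $(\bar x, \bar y) - (v,w)$: this yields $\langle \zeta_k, (\bar x, \bar y) - (v,w)\rangle < 0$ for every $\zeta_k$ in the directional convexificator $\partial^{us}_D\mathcal{F}_k(v,w) + \Phi_k(v,w)\,\partial^{us}_D(-\mathcal{G}_k)(v,w)$ (the sum/scalar-multiple rule for convexificators identifies this set with $\partial^{us}_D\big(\mathcal{F}_k + \Phi_k(v,w)(-\mathcal{G}_k)\big)(v,w)$). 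Now choose the particular elements $\zeta_k$, $\mu_j \in \partial^{u}_D\mathcal{H}_j(v,w)$, $\theta_s \in \partial^{u}_D\phi_s(v,w)$, $w^* \in \partial^{u}_D\Psi(v,w)$ and $z^* \in N_D(0_{n_1+n_2})$ that realise the dual stationarity equation of problem (DA). Because $\Upsilon^* \geq 0$ with $(\xi^*_1,\dots,\xi^*_n) \neq 0$, weighting the strict inequalities by $\xi^*_k$ and summing gives $\big\langle \sum_{k=1}^n \xi^*_k \zeta_k,\ (\bar x, \bar y) - (v,w)\big\rangle < 0$, and hence, substituting the stationarity equation,
\[
 \Big\langle \sum_{j=1}^p \tau^*_j \mu_j + \sum_{s=1}^q \rho^*_s \theta_s + \eta^* w^* + z^*,\ (\bar x, \bar y) - (v,w)\Big\rangle > 0 .
\]

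The heart of the argument is to contradict this by showing the left-hand side is $\leq 0$, term by term. For each $j$: if $\tau^*_j = 0$ the term vanishes; if $\tau^*_j > 0$ then the dual sign condition $\tau^*_j \mathcal{H}_j(v,w) \geq 0$ forces $\mathcal{H}_j(v,w) \geq 0$, so primal feasibility $\mathcal{H}_j(\bar x, \bar y) \leq 0$ gives $\mathcal{H}_j(\bar x, \bar y) - \mathcal{H}_j(v,w) \leq 0$, and $\partial^{u}_D$-quasiconvexity of $\mathcal{H}_j$ at $(v,w)$ together with $(\bar x, \bar y) - (v,w) \in D$ yields $\langle \mu_j, (\bar x, \bar y) - (v,w)\rangle \leq 0$, whence $\tau^*_j \langle \mu_j, (\bar x, \bar y) - (v,w)\rangle \leq 0$. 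The same reasoning applied to $\rho^*_s \phi_s(v,w) \geq 0$ and $\eta^* \Psi(v,w) \geq 0$ (with the quasiconvexity of $\phi_s$ and $\Psi$ at $(v,w)$, and using $\phi_s(\bar x, \bar y) \leq 0$ and $\Psi(\bar x, \bar y) \leq 0$, which hold since $(\bar x, \bar y)$ is feasible for (A)) gives $\rho^*_s \langle \theta_s, (\bar x, \bar y)-(v,w)\rangle \leq 0$ and $\eta^* \langle w^*, (\bar x, \bar y)-(v,w)\rangle \leq 0$. Finally, since $D$ is a cone containing the origin, $D \subseteq W(D,0_{n_1+n_2}) \subseteq T(D,0_{n_1+n_2})$, so $z^* \in N_D(0_{n_1+n_2}) = T(D,0_{n_1+n_2})^{\circ}$ forces $\langle z^*, (\bar x, \bar y) - (v,w)\rangle \leq 0$. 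Adding the four nonpositive quantities contradicts the displayed strict inequality, and therefore $\Phi(\bar x, \bar y) \nless \Phi(v,w)$.

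I expect the main obstacle to be the careful bookkeeping in the quasiconvexity step: one must check that every index contributing a nonzero multiplier to the dual stationarity equation is one for which the quasiconvexity hypothesis is actually available and for which the required order inequality ($\mathcal{H}_j(\bar x, \bar y) \leq \mathcal{H}_j(v,w)$, and likewise for $\phi_s$ and $\Psi$) holds — this is where the active-index sets $J_0(\bar x, \bar y)$, $S_0(\bar x, \bar y)$ and the dual sign conditions have to be reconciled exactly as in the splitting $J = J_0 \cup J_0^{\neq}$, $S = S_0 \cup S_0^{\neq}$ already used for Theorem \ref{thm S1}. The remaining ingredients — the fractional-to-linear reduction, the sum rule for the directional semi-regular convexificator of $\mathcal{F}_k + \Phi_k(v,w)(-\mathcal{G}_k)$, and the inclusion $D \subseteq T(D,0_{n_1+n_2})$ — are routine.
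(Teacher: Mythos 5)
Your argument follows essentially the same route as the paper's proof of Theorem \ref{thm D1}: negate the conclusion, pass from the fractional comparison to $\big(\mathcal{F}_k+\Phi_k(v,w)(-\mathcal{G}_k)\big)(\bar x,\bar y)<0=\big(\mathcal{F}_k+\Phi_k(v,w)(-\mathcal{G}_k)\big)(v,w)$, apply $\partial^{u}_D$-pseudoconvexity to the elements realising the stationarity inclusion of (DA), transfer the resulting strict inequality to the constraint-plus-normal-cone side, and contradict it by showing each of those inner products is nonpositive (quasiconvexity for $\mathcal{H}_j,\phi_s,\Psi$, and $N_D(0_{n_1+n_2})=T(D,0_{n_1+n_2})^{\circ}$ with $(\bar x,\bar y)-(v,w)\in D$ for $z^*$). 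Your treatment of $\Psi$ and of $z^*$ is exactly the paper's (indeed you justify the $z^*$ step more explicitly).

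The one place where you diverge is the bookkeeping for the $\mathcal{H}_j$ and $\phi_s$ terms, and this is where the gap you yourself flag sits. You split on $\tau_j^*>0$ versus $\tau_j^*=0$ and use the dual sign condition $\tau_j^*\mathcal{H}_j(v,w)\ge 0$ together with primal feasibility to get $\mathcal{H}_j(\bar x,\bar y)\le 0\le\mathcal{H}_j(v,w)$; this is the same trick the paper applies only to $\Psi$, and the level inequality you derive is correct. But it forces you to invoke $\partial^{u}_D$-quasiconvexity of $\mathcal{H}_j$ at $(v,w)$ for \emph{every} $j$ with $\tau_j^*>0$, whereas the theorem grants it only for $j\in J_0(\bar x,\bar y)$ (and similarly for $\phi_s$ with $s\in S_0(\bar x,\bar y)$); an index with positive multiplier need not be active at $(\bar x,\bar y)$. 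Your proposed reconciliation, ``exactly as in the splitting $J=J_0\cup J_0^{\neq}$ used for Theorem \ref{thm S1},'' does not close this: in Theorem \ref{thm S1} the conclusion $\tau_j^*=0$ for $j\in J_0^{\neq}(\check x,\check y)$ came from the complementarity conditions $\tau_j^*\mathcal{H}_j(\check x,\check y)=0$ in \eqref{eq3}, and (DA) contains no complementarity at the primal point $(\bar x,\bar y)$ — only the sign conditions at $(v,w)$. To be fair, the paper's own proof stumbles at the very same spot (it asserts $\tau_j^*=0$ for $j\in J_0^{\neq}(\bar x,\bar y)$, which the dual constraints do not imply), so your version is if anything the cleaner of the two; but as written it proves the theorem only under quasiconvexity of $\mathcal{H}_j$, $\phi_s$ on the support of the multipliers (or for all $j\in J$, $s\in S$), not under the hypotheses as literally stated.
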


\begin{proof}
 Suppose for the contrary that there exists a feasible point $(\bar{x}, \bar{y})$ of \text{(A)} and a feasible point $(v, w, \Upsilon^*)$ of \text{(DA)} such that 	
 \begin{align}
  & \Phi (\bar{x}, \bar{y}) < \Phi (v, w) \Longrightarrow ~ {\mathcal{F}_k}(\bar{x}, \bar{y}) + \Phi_k(v,w) (-\mathcal{G}_k) (\bar{x}, \bar{y}) < 0, ~ \forall ~ k \in \{1,\ldots,n\}. 
 \end{align}
 Thus ${\mathcal{F}_k}(\bar{x}, \bar{y}) + \Phi_k(v,w)(-\mathcal{G}_k) 
 (\bar{x}, \bar{y}) < 0$. Since $\mathcal{F}_k (v, w) + \Phi_k  (v, w) (-\mathcal{G}_k) (v, w) = 0$, with $k= 1,\ldots,n$, we have
 \begin{equation*}
  {\mathcal{F}_k} (\bar{x}, \bar{y}) + \Phi_k (v, w) (-\mathcal{G}_k )
  (\bar{x}, \bar{y}) < \mathcal{F}_k (v, w) + \Phi_k (v, w) (-\mathcal{G}_k )
  (v, w).
 \end{equation*}
 Thus, $(\bar{x}, \bar{y}) \neq (v,w)$. As $\xi := (\xi^*_1, \ldots ,\xi^*_n) 
 \geq 0$, with $\xi \neq 0$, we have
\begin{equation*}
 \sum_{k=1}^{n} \xi^*_k ({\mathcal{F}_k}(\bar{x}, \bar{y}) + \Phi_k (v,w) (-\mathcal{G}_k )(\bar{x}, \bar{y}) ) < \sum_{k=1}^{n} \xi^*_k ( \mathcal{F}_k (v,w) + \Phi_k (v,w) (-\mathcal{G}_k) (v, w)).   
\end{equation*}
	
Since $(v, w, \Upsilon^*) \in {\Omega}$, there exists $\vartheta_k^{*'} \in  \partial^{us}_D \mathcal{F}_k (v, w)$,  $\vartheta_k^{*''} \in \partial^{us}_D (-\mathcal{G}_k ) (v, w)$, $\mu_j \in \partial^{u}_D \mathcal{H}_j (v, w)$ for all $j \in J$, $\theta_s \in \partial^{u}_D \phi_s (v, w)$ for all $s \in S$, $w^{*} \in  \partial^{u}_D \Psi (v, w)$, $z^* \in N_D (0_{n_1 + n_2})$ such that 
 \begin{align}
  & \sum_{k=1}^{n} \xi^*_k  \big (\vartheta_k^{*'} + \Phi_k (v, w) \vartheta_k^{*''} \big) = - \sum_{j=1}^{p} \tau_j^* \mu_j - \sum_{s=1}^{q} \rho^*_s \ \theta_s - \eta^* w^{*} - z^* \label{eq17} \\
  & \tau_j^* \mathcal{H}_j (v, w) \geq 0, ~ \rho^*_s \phi_s (v, w) \geq 0, ~ \eta^* \Psi (v, w) \geq 0, ~ \forall ~ j \in J, ~ \forall ~ s \in S. \notag
 \end{align} 
	
As $(\bar{x}, \bar{y}) \in E,$ we have  
\begin{align*}
 & \tau_j^* \mathcal{H}_j (\bar{x}, \bar{y})	\leq \tau_j^* \mathcal{H}_j 
 (v, w), ~ \forall ~j \in J, \\
 & \, \rho^*_s \phi_s (\bar{x}, \bar{y}) \leq \rho^*_s \phi_s 
 (v, w),~ \forall ~ s \in S, \\
 & ~ \eta^* \Psi (\bar{x}, \bar{y}) \leq \eta^*\Psi (v, w).
\end{align*}
	
Since $\big(\mathcal{F}_k + \Phi_k (v, w) (-\mathcal{G}_k )\big)$ is $\partial^{u}_D$-pseudoconvex at $(v,w)$ for every $k \in \{1,\dots,n\}$, we have
 \begin{align}
  & \hspace{1.2cm} \langle (\vartheta_k^{*'} + \Phi_k (v, w) \vartheta_k^{*''}), (\bar{x}, \bar{y}) - (v, w) \rangle \  < 0 \notag \\ 
 & \Longrightarrow \, \sum_{k=1}^{n} \, \big\langle \xi^*_k (\vartheta_k^{*'} + \Phi_k (v, w) \vartheta_k^{*''}), (\bar{x}, \bar{y}) - (v, w) \big\rangle < 0.
  \label{eq18}
\end{align}
By using relation \eqref{eq17}, we get 
\begin{equation}\label{eq19}
 \Bigg\langle \sum_{j=1}^{p} \tau_j^* \mu_j + \sum_{s=1}^{q} \rho^*_s 
 \theta_s + \eta^* w^{*} + z^*, (\bar{x}, \bar{y}) - (v, w) \Bigg\rangle \  > 0.
\end{equation}
As $\mathcal{H}_j$ and $\phi_s$ are $\partial^{u}_D$-quasiconvex at $(v, w)$ for all $j \in J$ and all $s \in S$, and since $z^* \in N_D(0_{n_1 + n_2})$, we get 
\begin{align}
 & \big\langle \mu_j, (\bar{x}, \bar{y}) - (v, w) \big\rangle  \leq 0, ~~ \forall ~ j \in J_0 (\bar{x}, \bar{y}), \notag \\
 & \big\langle \theta_s, (\bar{x}, \bar{y}) - (v, w) \big\rangle \leq 0, ~~ \forall ~ s \in S_0 (\bar{x}, \bar{y}), \notag \\
 & \big\langle z^*, (\bar{x}, \bar{y}) - (v, w) \big\rangle \leq 0. \label{eq20}
\end{align}
Then 
\begin{align*}
 & \big\langle \tau_j^* \mu_j, (\bar{x}, \bar{y})- (v,w) \big\rangle  \leq 0, ~~ \forall ~ j \in J_0(\bar{x}, \bar{y}), \\
 & \big\langle \rho^*_s \theta_s, (\bar{x}, \bar{y}) - (v, w) \big\rangle \leq 0, ~~ \forall ~ s \in S_0 (\bar{x}, \bar{y}).
\end{align*}
If $j \in J_0^{\neq} (\bar{x}, \bar{y})$, then it follows that $\tau_j^* = 0$, while if $s \in S_0^{\neq} (\bar{x}, \bar{y})$, then $\rho^*_s=0$. Hence,
\begin{align}
 & \sum_{j=1}^{p} \tau_j^* \big\langle \mu_j, (\bar{x}, \bar{y}) - (v, w) \big\rangle 
 \leq 0, \label{eq21} \\
 & \sum_{s=1}^{q} \rho^*_s \big\langle \theta_s, (\bar{x}, \bar{y}) - (v, w) \big\rangle \leq 0, \label{eq22}
\end{align}
Now, we claim that
\begin{equation}\label{eq23}
 \big\langle \eta^* w^{*}, (\bar{x}, \bar{y})- (v, w) \big\rangle \leq 0.
\end{equation}
 Indeed, when $\eta^* > 0$, we have $\Psi(\bar{x}, \bar{y})\leq \Psi (a,b)$, 
 and since $\Psi$ is $\partial^{u}_D$-quasiconvex at $(v,w),$ we get $\big\langle \eta^* w^{*}, (\bar{x}, \bar{y}) - (v, w) \big\rangle \leq 0$, while when $\eta^* = 0$, we have $\big\langle \eta^* w^{*}, (\bar{x}, \bar{y}) - (v, w) \big\rangle = 0$. Hence relation \eqref{eq23} holds.

Therefore, it follows from \eqref{eq20}, \eqref{eq21}, \eqref{eq22} and 
\eqref{eq23} that 
\begin{equation}
 \Bigg\langle  \sum_{j=1}^{p} \tau_j^* \mu_j + \sum_{s=1}^{q} \rho^*_s \theta_s + \eta^* w^{*} + z^*,\, (\bar{x}, \bar{y}) - (v, w) \Bigg\rangle \leq 0.
\end{equation}
which contradicts \eqref{eq19}. Hence $\Phi(\bar{x}, \bar{y}) \nless \Phi(v,w)$,
and the proof is complete.
\end{proof}

Our final main result, which provides strong duality for the bilevel multiobjective 
optimization problem, is given below.

\begin{theorem}\textbf{(Strong  Duality):}\label{thm D2}
Let $(\check{x},\check{y})$ be a weak Pareto solution of \text{(A)} with ${\partial}^{u}_D$-nonsmooth ACQ holds. Then there exists $\delta^* = (\xi^*, \tau^*, \rho^*, \eta^*) \in \mathbb{R}_+^{(n+p+q+1)}$, with $\xi^* \neq 0_{\mathbb{R}^n}$, such that $(\check{x}, \check{y}, \delta^*)$ is a feasible point of \text{(DA)}. Fur\-ther\-mo\-re, if $(\mathcal{F}_k + \Phi_k (\check{x}, \check{y}) (-\mathcal{G}_k))$ is $\partial^{u}_D$-pseudoconvex at $(\check{x}, \check{y})$ for every $k \in \{1,\ldots,n\}$, $\mathcal{H}_j$ (with $j \in J_0 (\check{x}, \check{y})$), $\phi_s$ (with $s \in S_0 (\check{x}, \check{y})$) and $\Psi$ are $\partial^{u}_D$-quasiconvex at $(\check{x}, \check{y})$, then $(\check{x}, \check{y}, \delta^*)$ is a weak Pareto solution for problem
 $($\text{DA}$)$.
\end{theorem}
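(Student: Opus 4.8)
The plan is to treat the two assertions separately. For the first one, I would observe that a weak Pareto solution of \text{(A)} is a fortiori a local weak Pareto solution, so Theorem~\ref{thm N1} applies at $(\check{x},\check{y})$ (its remaining hypotheses---existence and upper semicontinuity of the directional convexificators, local star-shapedness of $E$, and closedness of $pos\,\Xi(\check{x},\check{y})$---being assumed to hold). This yields $\delta^*=(\xi^*,\tau^*,\rho^*,\eta^*)\in\mathbb{R}_+^{(n+p+q+1)}$ with $\xi^*\neq 0_{\mathbb{R}^n}$ satisfying the inclusion \eqref{eq3} together with $\tau_j^*\mathcal{H}_j(\check{x},\check{y})=0$ for $j\in J$ and $\rho^*_s\phi_s(\check{x},\check{y})=0$ for $s\in S$. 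It then remains to verify, line by line, that $(\check{x},\check{y},\delta^*)$ satisfies every constraint of \text{(DA)}: the stationarity inclusion is precisely \eqref{eq3} read at $(v,w)=(\check{x},\check{y})$; the sign conditions $\tau_j^*\mathcal{H}_j(\check{x},\check{y})\geq 0$ and $\rho^*_s\phi_s(\check{x},\check{y})\geq 0$ hold because the corresponding products vanish; $\eta^*\Psi(\check{x},\check{y})\geq 0$ holds because $(\check{x},\check{y})\in E$ together with $\check{y}\in\mathfrak{B}(\check{x})$ forces $\Psi(\check{x},\check{y})=0$; and $\Upsilon^*\geq 0$, $(\xi^*_1,\dots,\xi^*_n)\neq(0,\dots,0)$ are immediate. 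Hence $(\check{x},\check{y},\delta^*)$ is feasible for \text{(DA)}.

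For the second assertion I would argue by contradiction. Suppose $(\check{x},\check{y},\delta^*)$ is not a weak Pareto solution of \text{(DA)}; then some feasible triple $(v,w,\Upsilon^*)$ of \text{(DA)} satisfies $\Phi(v,w)-\Phi(\check{x},\check{y})\in int\,\mathbb{R}^n_+$, i.e. $\Phi_k(v,w)>\Phi_k(\check{x},\check{y})$ for every $k$. Since $(\check{x},\check{y})$ is feasible for \text{(A)}, I would then invoke the weak duality result (Theorem~\ref{thm D1}) applied to the primal point $(\check{x},\check{y})$ and the dual point $(v,w,\Upsilon^*)$: under the $\partial^{u}_D$-pseudoconvexity and $\partial^{u}_D$-quasiconvexity hypotheses in force it gives $\Phi(\check{x},\check{y})\nless\Phi(v,w)$, contradicting the strict componentwise inequalities just obtained. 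Therefore no such $(v,w,\Upsilon^*)$ can exist, and $(\check{x},\check{y},\delta^*)$ is a weak Pareto solution of \text{(DA)}.

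The verification of the dual constraints in the first part is routine once the identity $\Psi(\check{x},\check{y})=0$ on the feasible set has been recorded. The step that demands the most care is the use of Theorem~\ref{thm D1} in the second part: that theorem requires the generalized convexity to be available at the comparison dual point $(v,w)$ and $(\check{x},\check{y})-(v,w)\in D$, so one must make sure these hypotheses are genuinely in force for every dual-feasible triple capable of dominating $(\check{x},\check{y},\delta^*)$, and not merely at $(\check{x},\check{y})$ itself. I expect this matching of hypotheses, rather than any computation, to be the main obstacle; a secondary subtlety is the reduction from ``weak Pareto'' to ``local weak Pareto'' needed to invoke Theorem~\ref{thm N1}.
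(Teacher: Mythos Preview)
Your proposal is correct and follows essentially the same approach as the paper: invoke Theorem~\ref{thm N1} to obtain $\delta^*$ and verify dual feasibility via $\Psi(\check{x},\check{y})=0$, then argue by contradiction using Theorem~\ref{thm D1}. The subtlety you flag---that Theorem~\ref{thm D1} demands the generalized convexity assumptions at the dominating dual point $(v,w)$ and $(\check{x},\check{y})-(v,w)\in D$, whereas the theorem statement only provides them at $(\check{x},\check{y})$---is genuine, and the paper's own proof glosses over this same mismatch without comment.
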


\begin{proof}
 Let $(\check{x}, \check{y})$ be a weak Pareto solution of \text{(A)} and such 
 that ${\partial}^{u}_D$-nonsmooth ACQ holds. Using Theorem \ref{thm N1}, 
 we can find $\xi^* \in (-\mathbb{R}^n_+)^\circ \backslash  \{0\}$ and $(\tau^*, \rho^*, \eta^*) \in  \mathbb{R}{^{(p+q+1)}_{+}}$ such that 
 \begin{align*}
  (0,0) \in & ~ \sum_{k=1}^{n}\xi^*_k  \big(\partial^{us}_D \mathcal{F}_k 
  (\check{x}, \check{y}) + \Phi_k (\check{x}, \check{y})\big(\partial^{us}_D
  (-\mathcal{G}_k)(\check{x},\check{y}) \big) + \sum_{j=1}^{p} \tau_j^* 
   \ \big(\partial^{u}_D \mathcal{H}_j (\check{x}, \check{y}) \\ 
  & + \sum_{s=1}^{q} \rho^*_s \ {\partial}^{u}_D \phi_s (\check{x}, 
  \check{y}) + \eta^*\ {\partial}^{u}_D \Psi (\check{x},\check{y}) + 
  N_D(0_{n_1+n_2}), \\
  & ~ \tau_j^* \mathcal{H}_j (\check{x}, \check{y}) = 0, ~ \forall ~ j \in J, \\ 
  & ~ \rho^*_s \phi_s (\check{x}, \check{y}) = 0, ~ \forall ~ s \in S.
\end{align*}
Since $\Psi (\check{x}, \check{y}) = 0$, $(\check{x}, \check{y},\delta^*)$ is 
a feasible point of \text{(DA)}.

Now, we will show that $(\check{x}, \check{y}, \delta^*)$ is a weak Pareto 
solution of $(\text{DA})$. Indeed, suppose for the contrary that there exists
$(\check{x_0}, \check{y_0}, \delta_0^*) \in \Omega$ such that
\begin{equation*}
 \Phi (\check{x}, \check{y}) - \Phi (\check{x_0}, \check{y_0}) \in -int \,
 (\mathbb{R}^n_+). 
\end{equation*}
Since $(\check{x}, \check{y}, \delta^*)$ and $(\check{x}, \check{y})$ are 
feasible points for \text{(A)} and \text{(DA)}, it follows from Theorem 
\ref{thm D1} that $\Phi (\check{x}, \check{y}) - \Phi (\check{x_0},
\check{y_0}) \notin -int\ \mathbb{R}^n_+$, a con\-tradiction.
\end{proof}

We finish the paper with the following example.

\begin{example}
Let us consider problem \text{$(Q_{1})$} again, addressed in Example \ref{ex2}, in order to analyze its Mond-Weir type dual problem
    $$(\text{MQ})\ \begin{cases}
        {\mathop{\mathbb{R}^{2}_+ - max }}  \left(\frac{\mathcal{F}_1 (v, w)}{\mathcal{G}_1 
 (v, w)},\, \frac{\mathcal{F}_2 (v, w)}{\mathcal{G}_2 (v, w)} \right) \\
s.~t. 
\\
 (0, 0) \in \sum_{k=1}^{2}\xi^*_k \Big({\partial}^{us}_D \mathcal{F}_k
 (v, w) + \Phi_k (v, w){\partial}^{us}_D (-\mathcal{G}_k) (v, w) \Big) + N_D 
 (0, 0) \\
 \vspace{0.1cm}
 \hspace{1.0cm} + \tau_1^* \ {\partial}^{u}_D \mathcal{H}_1 
 (v, w)+\tau_2^* \ {\partial}^{u}_D \mathcal{H}_2 
 (v, w) +  \rho^* \ {\partial}^{u}_D \phi_1 (v, w) 
 + \eta^*\ {\partial}^{u}_D \Psi (v, w) \\		
 \vspace{0.1cm}
 ~ \tau^*_1 \mathcal{H}_1 (v,w) \geq 0, ~\tau^*_2 \mathcal{H}_2 (v,w) \geq 0, \\  
 ~~ \rho^* \phi_s (v,w) \geq 0, \\ 
 \vspace{0.1cm}
 ~~~ \eta^* \Psi (v, w) \geq 0, \\
 \vspace{0.1cm}
 ~ \Upsilon^* = (\xi^*_{1}, \xi^*_2, \tau^*_1, \tau^*_2, 
 \rho^*, \eta^*) \geq 0, \\
 ~ (\xi^*_1, \xi^*_2) \neq 
 (0, 0),
 \end{cases}$$
where $\mathcal{F}_2 (v, w) = v + 2w + \frac{3}{2}$, $\mathcal{G}_2 (v, w) = 
v + \frac{1}{3}w + \frac{1}{2}$,
$$
\mathcal{F}_1(v, w) = \left\{
\begin{array}{ccl}
 v^2+2v+3 & {\rm if} & v \geq 0, \, w \geq 0, \\
 3v^2+v+4w^2-2w+4  & {\rm if} & v \geq 0, \, w < 0, \\
 w^2+2 & {\rm if} & v < 0,
\end{array}
\right.
$$
and
$$ 
\mathcal{G}_1(v, w) = \left\{
\begin{array}{ccl}
 v-2w+3 & {\rm if} & v \geq 0, \\
 1-v & {\rm if} & v < 0.
\end{array}
\right.
$$
Note that $\Phi_1(-1,0) = 1$ and $\Phi_2 (-1,0) = -1$. Using these values, we obtain $\varphi_2(v, w)= 2v+\frac{7}{3}w+2$ and 
$$
\varphi_1 (v, w) = \left\{
\begin{array}{ccl}
 v^2+v+2w & {\rm if} & v \geq 0, \, w \geq 0, \\
 3v^2+4w^2+1  & {\rm if} & v \geq 0, \, w < 0, \\
 v+w^2+1 & {\rm if} & v < 0,
\end{array}
\right.$$ where $\varphi_1(v, w)=\big(\mathcal{F}_1(v, w) + \Phi_1 (-1, 0) (-\mathcal{G}_1)(v, w)\big) $ and $\varphi_2(v, w)=\big(\mathcal{F}_2(v, w) + \Phi_2 (-1, 0) (-\mathcal{G}_2)(v, w)\big).$  

 On the one hand, $(-1, 0)$ is a feasible point of $(\text{MQ})$. Indeed, for  $\Upsilon^* =  \big(\frac{1}{2}, \frac{3}{2}, \frac{1}{4}, \frac{1}{2}, 1 ,\frac{1}{6}\big)$, where 
 $$\tau^*_1 \mathcal{H}_1 (-1, 0)=\frac{1}{2} \geq 0, ~\tau^*_2 \mathcal{H}_2 (-1, 0)=0 \geq 0,  $$
 $$\rho^* \phi_1 (-1, 0)=0 \geq 0, ~~~ \eta^* \Psi (-1, 0)=0 \geq 0.$$
 Since 
 $${\partial}^{us}_D \varphi_1 (-1, 0) = \big(1, 0\big),\,  {\partial}^{us}_D \varphi_2 (-1, 0) = \Big(2, \frac{7}{3}\Big),$$
 $${\partial}^{u}_D \mathcal{H}_1 
 (-1, 0)= (-1, -1),~{\partial}^{u}_D \mathcal{H}_2
 (-1, 0)=(0, 1), $$
 $$\ {\partial}^{u}_D \phi_1 (-1, 0)= (0, -1),~ {\partial}^{u}_D \Psi (-1, 0)=(0, 0),$$
we have 
 $$ \frac{1}{2}(1, 0)+\frac{3}{2}\Big(2, \frac{7}{3}\Big) +\frac{1}{4} (-1, -1)+\frac{1}{2} (0, 1)+  1(0, -1)+\frac{1}{6} (0, 0)+ \big(-\frac{13}{4}, -\frac{11}{4}\big)=(0, 0),$$ where $\big(-\frac{13}{4}, -\frac{11}{4}\big) \in N_D 
 (0, 0).$ 

We can check that $\varphi_1,\, \varphi_2$ are $\partial^{u}_D$-pseudoconvex at 
$(-1, 0)$ and $\mathcal{H}_1$, $\mathcal{H}_2$, $\phi$ and 
$\Psi$ are $\partial^{u}_D$-quasiconvex at $(-1, 0)$.

\begin{itemize}
    \item $ \big\langle {\partial}^{us}_D \varphi_1 (-1, 0),\, (v, w)-  (-1, 0)\big\rangle = v+1 \geq 0,$
$$ \Rightarrow \varphi_1 (v, w) = \left( \left\{
\begin{array}{ccl}
 v^2+v+2w & {\rm if} & v \geq 0, \, w \geq 0, \\
 3v^2+4w^2+1  & {\rm if} & v \geq 0, \, w < 0, \\
 v+w^2+1 & {\rm if} & v < 0,
\end{array}
\right. \right) \geq \varphi_1 (-1, 0). $$

\item $\big\langle {\partial}^{us}_D \varphi_2 (-1, 0),\, (v, w)-  (-1, 0)\big\rangle = 2v+\frac{7}{3}w+2 \geq 0,$
$$ \Rightarrow \varphi_2 (v, w) = 2v+\frac{7}{3}w+2 \geq \varphi_1 (-1, 0). $$

\item $\mathcal{H}_1(v, w) \leq  \mathcal{H}_1 (-1, 0) \Rightarrow 
- v - w \leq 1,$
$$\Rightarrow \big\langle {\partial}^{u}_D \mathcal{H}_1 (\check{x}, \check{y}),\, (v, w)-  (-1, 0)\big\rangle = -v-1-w \leq 0.$$

\item $\mathcal{H}_2(v, w) \leq  \mathcal{H}_2 (-1, 0) \Rightarrow 
 w \leq 0,$
$$\Rightarrow \big\langle {\partial}^{u}_D \mathcal{H}_2 (\check{x}, \check{y}),\, (v, w)-  (-1, 0)\big\rangle = w \leq 0.$$

\item $\phi(x,y) \leq \phi (\check{x}, \check{y}) \Rightarrow 
 w^2-w \leq 0,$
$$\Rightarrow \big\langle {\partial}^{u}_D \phi (-1, 0),\, (v, w)-  (-1, 0)\big\rangle = -w \leq 0.$$

\item $\Psi(v, w) \leq \Psi (-1, 0) \Rightarrow 
 \min\{w^2 - w^3, 0\} \leq 0,$
$$\Rightarrow \big\langle {\partial}^{u}_D \phi (-1, 0),\, (v, w)-  (-1, 0)\big\rangle \leq 0.$$
\end{itemize}

Then, for any feasible solution $(x, y) \in E = \mathbb{R}^+ \times \{0\}$ of problem $(\text{Q}_1)$ and any fea\-si\-ble solution $(v, w) \in \Omega$ of problem $(\text{MQ})$, we have 
$$\frac{\mathcal{F}_2 (x, y)}{\mathcal{G}_2 (x, y)}- \frac{\mathcal{F}_2 (v, w)}{\mathcal{G}_2 (v, w)} \geq 0,$$
i.e., Theorem \ref{thm D1} is applicable to both problems $(\text{Q}_1)$ and $(\text{MQ})$.
\end{example}

\section{Conclusions}   

We have discussed bilevel optimization problems with multiple fractional objectives, where the functions involved are not necessarily locally Lipschitz or continuous. We established necessary optimality conditions under a suitable constraint qualification based on directional upper convexificators. Furthermore, we derived sufficient optimality conditions under the assumptions of nonsmooth generalized convexity, namely $\partial^{u}_D$-pseudoconvexity and $\partial^{u}_D$-quasiconvexity. A Mond-Weir dual problem was also formulated, for which both weak and strong duality theorems were proven.

We hope our results offer new insights into nonsmooth multiobjective bilevel programs by utilizing techniques such as generalized subdifferentials for convexificators and generalized convex functions, as seen in works like \cite{KL,LK,ML2015}. The extension of these ideas to other problem classes will be the subject of subsequent work.

\bigskip

\noindent \textbf{Acknowledgements}
 This research was partially supported by ANID--Chile under project
 Fondecyt Regular 1241040 (Lara).

\end{document}